\documentclass[10pt]{amsart}
\usepackage[english]{babel}
\usepackage{amssymb}
\usepackage{amsmath}

\usepackage{lscape}

\thispagestyle{empty}

\textheight 21.5cm
\textwidth 14cm \topmargin -0.6cm \oddsidemargin 1cm \evensidemargin
1cm

\newtheorem{dummy}{Dummy}

\newtheorem{lemma}[dummy]{Lemma}
\newtheorem{theorem}[dummy]{Theorem}
\newtheorem{proposition}[dummy]{Proposition}
\newtheorem{corollary}[dummy]{Corollary}

\theoremstyle{definition}

\newcommand{\ignore}[1]{}

%\date{16.4.2021}

\author{C. Brown}
\author{S. Pumpl\"un}

\email{christian\_jb@hotmail.co.uk; susanne.pumpluen@nottingham.ac.uk}
\address{School of Mathematical Sciences\\
University of Nottingham\\ University Park\\ Nottingham NG7 2RD\\
United Kingdom }

%-----------------------------------------------------------------------------------

\keywords{Skew polynomials, irreducible polynomials.}

\subjclass[2010]{Primary: 16S36}

\begin{document}

\title[Irreducible skew polynomials over domains]
{Irreducible skew polynomials over domains}

\begin{abstract}
Let $S$ be a domain and $R=S[t;\sigma,\delta]$ a
skew polynomial ring, where $\sigma$ is an injective endomorphism of $S$ and $\delta$ a left $\sigma$-derivation.
 We give criteria for skew polynomials  $f\in R$ of degree less or equal to four to be irreducible.
 We apply them to low degree polynomials in quantized Weyl algebras and the quantum planes.
 We also consider $f(t)=t^m-a\in R$.
\end{abstract}

\maketitle

%*******************************************************************************************%
%
\section*{Introduction}
%
%*******************************************************************************************%

Let $R=S[t;\sigma,\delta]$, where $\sigma$ is an injective endomorphism  of $S$ and $\delta$ a left
$\sigma$-derivation.
A sequence of well-known papers by Lam, Leroy and others greatly contributed to our understanding
 of skew polynomials
over division rings and their factorization behaviour, starting with \cite{lam1988vandermonde}.
Some earlier results are contained in \cite{B, C, P66}. More recently,
 two general algorithms for computing
 the bound of a skew polynomial over a skew field were given in \cite{GLN}. As an application,
 a criterion for deciding whether a bounded skew polynomial is irreducible was developed.
 The computational method presented there heavily relies on finding the zero divisors in certain
 central simple algebras and is only
  applicable for certain set-ups, where the input data $S$, $\sigma$ and $\delta$ are effective and computable.
  It works for bounded skew polynomials.
  However, most of the results  on the irreducibility of skew polynomials in $R$ obtained so far assume that $S$ is a division algebra.
 Some first results on factoring certain skew polynomials of degree two in
 quantum planes and quantized Weyl algebras were collected in \cite{CP, HP, CookP}.

 In this note, $S$ is a domain, i.e., a unital ring without non-trivial zero divisors. We look at some skew polynomials $f$ of low degree in $R$
 and when they are irreducible. We briefly consider the skew polynomial  $f(t)=t^m-a\in R$ as well.
 This way we hope to give the reader a tool kit to be generalized and used also on higher degree polynomials, as well as a collection of examples of irreducible polynomials.

 The paper is organized as follows: In Section \ref{sec:irreduciblebenodomain}, we collect
  some necessary and sufficient criteria for skew polynomials in
 $S[t;\sigma,\delta]$  of degree less than five to be irreducible, adjusting well known results for skew polynomials over division rings, whose proofs carry over without problems.  The situation is easiest when
  $S$ is a right Ore domain. To see if $f\in R$ to be irreducible,
we can simply check if $f$ is irreducible in  $D[t;\sigma,\delta]$,
where $D$ is the right ring of fractions of $S$. We employ this approach for $f(t)=t^m-a\in S[t;\sigma,\delta]$.
 In Section \ref{subsec:irred},
 we systematically look at irreducibility criteria for polynomials of degree two and three
 in  $K[y][t;\sigma]$.
 Following \cite{BI, BII, BIII}, we define $A_h=K[y][t; \delta]$ with $\delta(r) = r'h$ for some $h(y)
\in K[y]$, where $r'$ is the usual derivation of $r$ with respect to $y$. For instance, $A_y$ is the universal enveloping algebra of the two-dimensional non-abelian Lie algebra. The algebra
$A_{y^2}$ is also known as the Jordan plane which appears in noncommutative algebraic geometry.
Irreducible polynomials of degree two and three as well as $f(t)=t^m-a$ in the quantum plane,
 and irreducible skew polynomials
of degree two in a quantized Weyl algebra and in $A_h$  are then considered in Section \ref{sec:Weyletc}.

In particular, a monic polynomial $f \in K[t]$ of degree two or three is irreducible in the quantum plane
$K[y][t;\sigma]$ if and only if it is irreducible in $K[t]$ (Corollary \ref{thm:irredquantum}), and
a degree two polynomial  $f \in K[t]$ is irreducible in the quantized Weyl algebra
$K[y][t;\sigma,\delta]$ if and only if it is irreducible in $K[t]$ (Corollary \ref{thm:irredWeyl}).

%%%%%%%%%%%%%%%%%%%%%%%%%%%%%%%%%%%%%%%%%%%%%%%%%%%%%%%%%%%%%%%%%%%%%%%%%%%%%%%%
%
% Irreducibility criteria
%
%%%%%%%%%%%%%%%%%%%%%%%%%%%%%%%%%%%%%%%%%%%%%%%%%%%%%%%%%%%%%%%%%%%%%%%%%%%%%%%%%

\section{Irreducibility criteria for polynomials of low degree in $S[t;\sigma,\delta]$ where $S$ is a domain}
\label{sec:irreduciblebenodomain}

Let $S$ be a domain, $\sigma$  an injective endomorphism of $S$ and $\delta$  a left $\sigma$-derivation of $S$,
i.e. an additive map such that $\delta(ab)=\sigma(a)\delta(b)+\delta(a)b$ for all $a,b\in S$.
The \emph{skew polynomial ring} $R=S[t;\sigma,\delta]$ is the
set of skew polynomials $g(t)=a_0+a_1t+\dots +a_nt^n$ with $a_i\in
S$, with term-wise addition, where the multiplication is defined
via $ta=\sigma(a)t+\delta(a)$ for all $a\in S$ \cite{O1}. That means,
$$at^nbt^m=\sum_{j=0}^n a(\Delta_{n,j}\,b)t^{m+j}$$ for all $a,b\in
S$, where the map $\Delta_{n,j}$ is defined recursively via
$$\Delta_{n,j}=\delta(S_{n-1,j})+\sigma (\Delta_{n-1,j-1}),$$ with
$\Delta_{0,0}=id_S$, $\Delta_{1,0}=\delta$, $\Delta_{1,1}=\sigma $.
Thus $\Delta_{n,j}$ is the sum of all polynomials in $\sigma$
and $\delta$ of degree $j$ in $\sigma$ and degree $n-j$ in $\delta$
\cite[p.~2]{J96}. If $\delta=0$, then $\Delta_{n,n}=\sigma^n$.
Define $S[t;\sigma]=S[t;\sigma,0]$.

 For $f(t)=a_0+a_1t+\dots +a_nt^n\in R$ with $a_n\not=0$ define ${\rm deg}(f)=n$ and ${\rm deg}(0)=-\infty$.
 An element $f\in R$ is \emph{irreducible} in $R$ if it is not a unit and  it has no proper factors, i.e if there do not exist $g,h\in R$ with
 ${\rm deg}(g),{\rm deg} (h)<{\rm deg}(f)$ such that $f=gh$.

   $f \in R$ is \emph{bounded} if there exists $0 \neq f^* \in
R$ such that $Rf^* = f^* R$ is the largest two-sided ideal of $R$
contained in $Rf$. The element $f^*$ is determined by $f$ up to
multiplication on the left by elements of $S^{\times}$.

\subsection{}
Since $S$ is a domain, we have $\mathrm{deg}(gh)
=\mathrm{deg}(g) + \mathrm{deg}(h)$ for all $g, h \in S[t;\sigma,\delta]$. This implies that some results hold for $f$, which were so far only shown for $S$ a division algebra. We start by collecting them here for the convenience of the reader.

\begin{theorem} \label{thm:irreducibility criteria when S is a ring without zero divisors}
(proved analogously as  \cite[Theorem 25, Theorem 31]{CP18})
 (i) $f(t) = t^2 - a_1 t - a_0 \in S[t;\sigma]$ is irreducible if and only if
$\sigma(b)b - a_1 b - a_0 \neq 0$
for all $b \in S$.
\\ (ii) Suppose $\sigma \in {\rm Aut}(S)$. Then $f(t) = t^3 - a_2 t^2 - a_1 t - a_0 \in S[t;\sigma]$ is irreducible if and only if
\begin{equation*}
\sigma^2 (b) \sigma(b) b - \sigma^2 (b)\sigma(b) a_2 - \sigma^2 (b)
\sigma(a_1) - \sigma^2 (a_0) \neq 0,
\end{equation*}
and
\begin{equation*}
\sigma^2 (b) \sigma(b) b - a_2 \sigma(b) b - a_1 b - a_0 \neq 0
\end{equation*}
for all $b \in S.$
\\ (iii) Suppose $\sigma \in {\rm Aut}(S)$, then $f(t) = t^3 - a \in S[t;\sigma]$ is irreducible if and only if $\sigma^2 (b) \sigma(b) b \neq a$ for all $b \in S$.
\\ (iv) Suppose $\sigma \in {\rm Aut}(S)$, then $f(t) = t^4 - a_3 t^3 - a_2 t^2 - a_1 t - a_0 \in S[t;\sigma]$ is irreducible if and only if
\begin{equation} \label{eqn:degree 4 irreducible 1}
\sigma^3 (b) \sigma^2 (b) \sigma(b) b + a_3 \sigma^2 (b) \sigma(b) b
+ a_2 \sigma(b) b + a_1 b + a_0 \neq 0,
\end{equation}
and
\begin{equation} \label{eqn:degree 4 irreducible 2}
\begin{split}
&\sigma^3 (b) \sigma^2 (b) \sigma (b) b + \sigma^3 (b) \sigma^2 (b)
\sigma (b) a_3 \\ &+ \sigma^3 (b) \sigma^2 (b) \sigma (a_2) +
\sigma^3 (b) \sigma^2 (a_1) + \sigma^3 (a_0) \neq 0,
\end{split}
\end{equation}
for all $b \in S$, and for every $ c , d \in S$, we have either
\begin{equation} \label{eqn:degree 4 irreducible 3}
\sigma^2 (c) \sigma(c)c + \sigma^2 (d)c + \sigma^2 (c) \sigma(d) +
a_3 (\sigma(d) + \sigma(c)c) + a_2 c + a_1 \neq 0,
\end{equation}
 or
\begin{equation} \label{eqn:degree 4 irreducible 4}
\sigma^2 (d)d + \sigma^2 (c) \sigma(c) d + a_3 \sigma(c)d + a_2 d +
a_0 \neq 0.
\end{equation}

\end{theorem}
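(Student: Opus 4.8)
The common engine is the degree formula recalled just before the statement. Since $\delta=0$ throughout, the top coefficient of a product is $\mathrm{lc}(g)\,\sigma^{\deg g}(\mathrm{lc}(h))$, so for a monic $f$ every factorization $f=gh$ forces $\mathrm{lc}(g)\,\sigma^{\deg g}(\mathrm{lc}(h))=1$. In a domain $xy=1$ forces $yx=1$, since $yx$ is then a nonzero idempotent and a domain has only the idempotents $0,1$; hence $\mathrm{lc}(g)$ and $\sigma^{\deg g}(\mathrm{lc}(h))$ are units, and when $\sigma$ is an automorphism this makes $\mathrm{lc}(h)$ itself a unit. The plan is first to reduce any proper factorization to one in which both factors are monic (absorbing the unit leading coefficients), after which $f$ is reducible precisely when it admits a monic right factor of some degree $k$ with $1\le k\le\deg f-1$. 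Everything then runs degree by degree exactly as in the cited division-ring references, the single genuinely new ingredient being this normalization.

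Next I would translate linear factors into one-sided roots. Writing $N_0(b)=1$ and $N_{i+1}(b)=\sigma(N_i(b))\,b$, so that $N_i(b)=\sigma^{i-1}(b)\cdots\sigma(b)\,b$, right division of $f=\sum_i c_it^i$ by $t-b$ leaves the remainder $\sum_i c_iN_i(b)$; hence $t-b$ is a monic right factor iff this vanishes. Evaluating this gives the inequality of (i) and the right-root (second) inequalities of (ii) and (\ref{eqn:degree 4 irreducible 1}). Symmetrically, left division of $f$ by $t-c$ determines the monic quotient using $\sigma^{-1}$, which is where the automorphism hypothesis enters, and its constant remainder is an expression in $c$ of the shape $c\,\sigma^{-1}(c)\cdots$; applying the injective map $\sigma^{\deg f-1}$, which does not change whether it is zero, clears the negative powers and reproduces the left-root (first) inequalities of (ii) and (\ref{eqn:degree 4 irreducible 2}). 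For the cubic this already closes the argument, because $3=1+2=2+1$ forces a reducible $f$ to have a linear right factor or a linear left factor, and the two inequalities of (ii) are exactly the negations.

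Part (iii) I would deduce from (ii) by specializing $a_2=a_1=0$, $a_0=a$. The right-root inequality becomes $\sigma^2(b)\sigma(b)b\neq a$ and the left-root inequality becomes $\sigma^2(b)\sigma(b)b\neq\sigma^2(a)$ for all $b$. These are equivalent: with $N_3(b)=\sigma^2(b)\sigma(b)b$ one checks $N_3(\sigma^2(c))=\sigma^2(N_3(c))$, so as $\sigma^2$ is bijective a solution of $N_3(b)=\sigma^2(a)$ exists iff a solution of $N_3(c)=a$ does. Hence a single inequality suffices, as stated.

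The quartic is where the real work lies, since $4=2+2$ permits a factorization into two \emph{irreducible} quadratics that no linear root can detect. To handle it I would right-divide $f$ by the generic monic quadratic $t^2-ct-d$: solving recursively for the quotient $t^2+q_1t+q_0$ expresses $q_1,q_0\in S$ explicitly through $c$, $d$ and the coefficients of $f$, and substituting these into the two coefficients of the degree $\le 1$ remainder yields precisely the left-hand sides of (\ref{eqn:degree 4 irreducible 3}) and (\ref{eqn:degree 4 irreducible 4}), regarded as functions of $(c,d)$. Thus $f$ has a monic quadratic right factor iff some pair $(c,d)$ annihilates both remainder coefficients, and the absence of such a factor is exactly the requirement that for every $c,d$ at least one of (\ref{eqn:degree 4 irreducible 3}), (\ref{eqn:degree 4 irreducible 4}) be nonzero. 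Assembling the $1+3$, $2+2$ and $3+1$ cases gives (iv). The quadratic-division bookkeeping is the routine part; the step I expect to be the genuine obstacle, and the one place where the division-ring arguments must actually be rethought, is the reduction to monic factors over a mere domain, i.e. justifying the unit-leading-coefficient normalization under only the domain and, for (ii)--(iv), the automorphism hypotheses.
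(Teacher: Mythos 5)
Your proposal follows what is, in effect, the paper's own proof: the paper gives no argument beyond observing that degrees add over a domain and then citing the division-ring proofs of \cite[Theorems 34, 41]{CP18}, and those proofs are precisely your scheme (linear factors read off from right/left roots, with the injective map $\sigma^{\deg f-1}$ applied to clear the $\sigma^{-1}$'s produced by left division; the $2+2$ case of the quartic via division by a generic monic quadratic; and your identity $N_3(\sigma^2(c))=\sigma^2(N_3(c))$ correctly collapses the two cubic conditions into the single condition of (iii)). The normalization to monic factors that you isolate is indeed the one genuinely new ingredient needed over a domain, and for (ii)--(iv) your own remarks already close it: the relation $\mathrm{lc}(g)\,\sigma^{\deg g}(\mathrm{lc}(h))=1$ together with the fact that one-sided inverses in a domain are two-sided makes $\sigma^{\deg g}(\mathrm{lc}(h))$ a unit, and the automorphism hypothesis pulls this back to $\mathrm{lc}(h)$ itself. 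So parts (ii)--(iv) of your plan are sound modulo routine computation.

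For part (i), however, the obstacle you flag in your final sentence is not a gap you could have filled: the statement is false for $\sigma$ merely injective, which is the hypothesis in force there. From $f=(g_1t+g_0)(h_1t+h_0)$ one only gets that $\sigma(h_1)$ is a unit, not that $h_1$ is, so the right factor need not be normalizable to $t-b$ with $b\in S$. Concretely, let $S=K[x_0,x_1^{\pm1},x_2^{\pm1},\dots]$ (a commutative domain) with the injective, non-surjective endomorphism $\sigma(x_i)=x_{i+1}$. Then in $S[t;\sigma]$ one has
\[
t^2+(x_1^{-1}+x_0)t+1=(x_1^{-1}t+1)(x_0t+1),
\]
so this polynomial is reducible, yet $\sigma(b)b+(x_1^{-1}+x_0)b+1\neq0$ for every $b\in S$: a root would satisfy $b\,\bigl(\sigma(b)+x_1^{-1}+x_0\bigr)=-1$, hence be a unit of $S$, hence lie in $K[x_1^{\pm1},x_2^{\pm1},\dots]$; but then, decomposing $S$ along powers of $x_0$, the term $x_0b\neq0$ is the entire $x_0$-component of the left-hand side, which therefore cannot vanish. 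Thus the ``if'' direction of (i) fails in the stated generality; it does hold, by exactly your normalization argument, once $\sigma$ is assumed to be an automorphism in (i) as well, or when $S$ is a division ring as in \cite{CP18}. In other words, your suspicion about the one place where ``the division-ring arguments must actually be rethought'' was exactly right: that step cannot be carried out for (i), and neither your proposal nor the paper's appeal to the same proofs repairs it, whereas (ii)--(iv) survive only because they carry the automorphism hypothesis.
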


That means in Theorem \ref{thm:irreducibility criteria when S is a ring without zero divisors} (iv),
the skew polynomial $f(t)$ is irreducible if and only if \eqref{eqn:degree 4
irreducible 1} and \eqref{eqn:degree 4 irreducible 2} and
(\eqref{eqn:degree 4 irreducible 3} or \eqref{eqn:degree 4
irreducible 4}) holds.

\begin{corollary} (proved analogously as \cite[Corollary 33]{CP18})
 Suppose $\sigma \in {\rm Aut}(S)$. Then
 $f(t) = t^4 - a \in S[t;\sigma]$ is reducible if and only if
$$\sigma^2 (c) \sigma(c)c + \sigma^2 (d)c + \sigma^2 (c) \sigma(d) =
0 \quad \text{and} \quad \sigma^2 (d)d + \sigma^2 (c) \sigma(c) d =a,$$ for some $c, d \in S$.
\end{corollary}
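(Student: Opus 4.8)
The plan is to treat the corollary as the specialization of Theorem~\ref{thm:irreducibility criteria when S is a ring without zero divisors}(iv) to $a_1=a_2=a_3=0$, $a_0=a$, but to prove it directly by analysing the possible monic factors of $f(t)=t^4-a$, which keeps the bookkeeping of the two displayed equations transparent. First I would reduce to monic factors: if $f=gh$ is a proper factorization with leading coefficients $u=\mathrm{lc}(g)$, $v=\mathrm{lc}(h)$ and $m=\deg g$, then comparing leading terms gives $u\,\sigma^{m}(v)=1$, and $\sigma^{m}(v)\,u$ is then a nonzero idempotent of the domain $S$, hence equal to $1$; so $u,v$ are units and replacing $(g,h)$ by $(gv,\,v^{-1}h)$ makes both factors monic without changing their degrees. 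Consequently $f$ is reducible if and only if it admits a monic right factor of degree $1$ or $2$, or a monic left factor of degree $1$ (the three cases $\deg h=1,2,3$ in $f=gh$).

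The heart of the argument is the degree-two case. Writing the candidate right factor as $t^2-ct-d$ and the complementary factor as $t^2+g_1t+g_0$, I would expand the product in $S[t;\sigma]$ using $t^k s=\sigma^k(s)t^k$ and match coefficients with $t^4-a$. The $t^3$ and $t^2$ coefficients force $g_1=\sigma^2(c)$ and $g_0=\sigma^2(c)\sigma(c)+\sigma^2(d)$, after which the $t$ and constant coefficients become exactly
\begin{equation*}
\sigma^2(c)\sigma(c)c+\sigma^2(d)c+\sigma^2(c)\sigma(d)=0 \quad\text{and}\quad \sigma^2(d)d+\sigma^2(c)\sigma(c)d=a .
\end{equation*}
Thus $f$ has a monic quadratic right factor if and only if these two equations hold, and in that case
\begin{equation*}
t^4-a=\bigl(t^2+\sigma^2(c)\,t+\sigma^2(c)\sigma(c)+\sigma^2(d)\bigr)\bigl(t^2-ct-d\bigr).
\end{equation*}
This already yields the implication ``equations $\Rightarrow$ reducible'' and disposes of the degree-two factor case in the converse.

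It remains to show that the linear-factor cases also force the displayed equations; this is the step I expect to be the \emph{main obstacle}, since a linear factor a priori only gives a $(1,3)$ or $(3,1)$ split rather than the $(2,2)$ split encoded by the equations. For a monic linear right factor $t-b$, right division of $t^4-a$ by $t-b$ leaves remainder $\sigma^3(b)\sigma^2(b)\sigma(b)b-a$, so such a factor exists iff $\sigma^3(b)\sigma^2(b)\sigma(b)b=a$; taking $c=0$ and $d=\sigma(b)b$ then satisfies the first equation trivially and gives $\sigma^2(d)d=\sigma^3(b)\sigma^2(b)\sigma(b)b=a$, the second. For a monic linear left factor $t-b'$, expanding $(t-b')(t^3+\alpha t^2+\beta t+\gamma)$ and matching coefficients (here I use that $\sigma$ is invertible) forces $b'\sigma^{-1}(b')\sigma^{-2}(b')\sigma^{-3}(b')=a$; taking $c=0$ and $d=\sigma^{-2}(b')\sigma^{-3}(b')$ again satisfies the first equation and gives $\sigma^2(d)d=b'\sigma^{-1}(b')\sigma^{-2}(b')\sigma^{-3}(b')=a$. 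In each case the displayed pair holds with $c=0$, completing the converse.

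Putting the three cases together gives reducibility $\Leftrightarrow$ the two displayed equations, as claimed. The points to get right are the reduction to monic factors over a mere domain (the idempotent argument above) and, above all, the observation that the existence of \emph{any} linear factor already produces a quadratic right factor via a partial norm $d$; this is exactly what makes conditions \eqref{eqn:degree 4 irreducible 1} and \eqref{eqn:degree 4 irreducible 2} of Theorem~\ref{thm:irreducibility criteria when S is a ring without zero divisors}(iv) redundant for $t^4-a$ and lets the criterion collapse to the single pair of equations stated here.
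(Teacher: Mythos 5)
Your proof is correct and follows essentially the same route as the paper's, which obtains the corollary from the case analysis behind Theorem \ref{thm:irreducibility criteria when S is a ring without zero divisors} (iv) (right/left linear factors and monic quadratic right factors); in particular your key observation—that any linear factor of $t^4-a$ already yields the displayed pair of equations with $c=0$ and $d$ a partial norm—is exactly what makes conditions \eqref{eqn:degree 4 irreducible 1} and \eqref{eqn:degree 4 irreducible 2} redundant here. The only difference is that you re-derive the coefficient comparisons and the reduction to monic factors from scratch rather than quoting the degree-four criterion, which makes your argument self-contained but mathematically the same.
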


We now recursively define  two sequences of maps: $N_i : S \rightarrow S$,
and $M_i:S \rightarrow S$, $i \geq 0$.
The maps $N_i$ are given via
$$N_0(b) = 1, \ N_{i+1}(b) = \sigma(N_i(b))b + \delta(N_i(b)), $$ i.e.,
$ N_1(b) = b, \ N_2(b) = \sigma(b)b + \delta(b),\ldots$
For the definition of the maps $M_i$ we
assume that $\sigma \in {\rm Aut}(S)$. Define
$$ M_0(b) = 1, \ M_{i+1}(b) = b \sigma^{-1}(M_i(b)) - \delta(\sigma^{-1}(M_i(b))),$$
i.e., $M_1(b) = b$, \ $M_2(b) = b \sigma^{-1}(b) - \delta(\sigma^{-1}(b)), \ldots$
\cite{lam1988vandermonde}.

Let $f(t) = t^m - \sum_{i=0}^{m-1} a_i t^i \in S[t;\sigma,\delta]$. Then $(t-b) \vert_r f(t)$ is equivalent to
$N_m(b) - \sum_{i=0}^{m-1} a_i N_i (b) = 0$
which is proved analogously as \cite[Lemma 2.4]{lam1988vandermonde}.

If $\sigma  \in {\rm Aut}(S)$, we can also view $R=S[t;\sigma,\delta]$ as a right
polynomial ring and write $f(t) = t^m -\sum_{i=0}^{m-1} a_i t^i \in R$ in the form $f(t) = t^m -
\sum_{i=0}^{m-1} t^i a_i'$ for some uniquely determined $a_i' \in S$.
 The remainder after dividing $f(t)$ on the left by $t-b$ is then given by  $M_m(b) - \sum_{i=0}^{m-1}
M_i(b) a_i'$ which is proved analogously as \cite[Proposition 49]{CP18}.

\begin{theorem}  \label{prop:irreducibility criteria when S is a ring without zero divisors delta not 0}
(proved analogously as  \cite[Theorem 50]{CP18})
Let $\sigma\in {\rm Aut}(S)$.
\\ (i) $f(t) = t^2 - a_1 t - a_0 \in S[t;\sigma,\delta]$ is irreducible if and only if
 $\sigma(b)b + \delta(b) - a_1 b - a_0 \neq 0$  for all $b \in S$.
\\ (ii) Suppose  $f(t) = t^3 - a_2 t^2 - a_1 t - a_0 \in S[t;\sigma,\delta]$.
Write $f(t) = t^3 - t^2 a_2' - t a_1' - a_0'$ for some unique $a_0', a_1', a_2' \in S$. Then $f(t)$ is irreducible if and
only if
$$N_3(b) - \sum_{i=0}^{2}a_i N_i(b) \neq 0
\text{ and }
M_3(b) - \sum_{i=0}^{2} M_i(b) a_i' \neq 0$$
for all $b \in S$.
\end{theorem}

%%%%%%%%%%%%%%%%%%%%%%%%%%%%%%%%%%%%%%%%%%%%%%%%%%%%%%%%%%%%%%%%%%%%%%%%%%%%%%%%%%%%%%%%%%
\subsection{Right Ore domains} \label{subsec:2}
%%%%%%%%%%%%%%%%%%%%%%%%%%%%%%%%%%%%%%%%%%%%%%%%%%%%%%%%%%%%%%%%%%%%%%%%%%%%%%%%%%%%%%%

A domain $S$ is a \emph{right Ore domain} if $aS \cap bS\neq \{ 0 \}$ for all $0 \neq a, b \in S$.
The \emph{ring of right
fractions} of $S$ is a division ring $D$ containing $S$, such that every element of $D$ is of the form $rs^{-1}$ for some $r \in S$ and $0 \neq s \in S$.
Moreover, $\sigma$ and $\delta$ extend uniquely to $D$ via
\begin{equation} \label{eqn:extend sigma delta to right ring of fractions}
\sigma(rs^{-1}) = \sigma(r)\sigma(s)^{-1} \text{ and }
\delta(rs^{-1}) = \delta(r)s^{-1} - \sigma(rs^{-1}) \delta(s) s^{-1},
\end{equation}
for all $r \in S, \ 0 \neq s \in S$ by \cite[Lemma 1.3]{Goodearl}.
Note that any integral domain is a right Ore domain; its  ring of right fractions is equal to its quotient field.
In this subsection, we assume that $S$ is a right Ore domain with ring of right fractions $D$, $\sigma$ is
 an injective endomorphism of $S$ and $\delta$ a
$\sigma$-derivation of $S$. Let $C(D)$ denote the center of $D$.

 If $S$ is a right Ore domain,
we can take a skew polynomial $f\in S[t;\sigma,\delta]$ and check if it is irreducible in  $D[t;\sigma,\delta]$, in which case it
will be irreducible in $ S[t;\sigma,\delta]$ as well. We thus obtain the following results as elementary
corollaries of
the corresponding results  \cite[Theorems 39 and 51, Corollary 52]{CP18}:

\begin{theorem} \label{thm:Petit(19)}
Suppose $m$ is prime and $C(D) \cap \mathrm{Fix}(\sigma)$
contains a primitive $m$th root of unity.
\\ (i)  $f(t) = t^m - a \in S[t;\sigma]$ is irreducible if
$$a \neq \sigma^{m-1} (b) \cdots\sigma(b) b$$
 for all $b \in D$.
 \\ (ii) If $\mathrm{char}(D) \neq m$ then $f(t) = t^m - a \in
S[t;\sigma,\delta]$ is irreducible if $N_m(b) \neq a$ for
all $b \in D$.
 \\ (iii) Suppose $m=3$ and $\mathrm{char}(D) \neq 3$. Then $f(t) = t^3-
a \in S[t; \delta]$ is irreducible if $$N_3(b)=b^3 + 2
\delta(b)b + b\delta(b) + \delta^2(b) \neq a,$$ for all $b \in D$.
\end{theorem}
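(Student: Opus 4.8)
The plan is to prove all three statements uniformly by \emph{descending} irreducibility from the larger ring $D[t;\sigma,\delta]$ to the subring $R=S[t;\sigma,\delta]$. First I would record the embedding: by \eqref{eqn:extend sigma delta to right ring of fractions} (justified via \cite[Lemma 1.3]{Goodearl}) the maps $\sigma$ and $\delta$ extend to $D$, so $D[t;\sigma,\delta]$ is a well-defined skew polynomial ring containing $R$ as a subring, and $f(t)=t^m-a$ is literally the same element of both rings. Since $D$ is a division ring, degrees are additive in $D[t;\sigma,\delta]$, so any factorization $f=gh$ in $R$ with $\deg g,\deg h<m$ is again a factorization into proper factors in $D[t;\sigma,\delta]$. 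Taking contrapositives, if $f$ is irreducible in $D[t;\sigma,\delta]$ then it is irreducible in $R$ (and it is certainly a non-unit there, being of degree $m\ge 2$). Hence it suffices to establish irreducibility of $f$ over the division ring $D$ under each hypothesis.

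Over $D$ the three claims are precisely the cited results of \cite{CP18}, now read with $D$ in place of the division algebra there. For (i), the right-divisibility criterion stated before the theorem (the analogue of \cite[Lemma~2.4]{lam1988vandermonde}) shows that when $\delta=0$ one has $(t-b)\vert_r f$ if and only if $\sigma^{m-1}(b)\cdots\sigma(b)b=N_m(b)=a$; thus the hypothesis exactly says that $f$ has no right factor $t-b$ with $b\in D$. Because $m$ is prime and $C(D)\cap\mathrm{Fix}(\sigma)$ contains a primitive $m$th root of unity, \cite[Theorem 39]{CP18} upgrades ``no linear right factor over $D$'' to full irreducibility over $D[t;\sigma]$, which by the first paragraph yields irreducibility over $S[t;\sigma]$. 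Part (ii) is identical, using the general maps $N_m$ and \cite[Theorem 51]{CP18} together with the extra hypothesis $\mathrm{char}(D)\neq m$. Part (iii) is the specialisation $m=3$, $\sigma=\mathrm{id}$ of (ii) (equivalently \cite[Corollary 52]{CP18}); here one only has to expand $N_3(b)$ recursively, using $\delta(b^2)=b\delta(b)+\delta(b)b$, to arrive at the displayed expression $b^3+2\delta(b)b+b\delta(b)+\delta^2(b)$.

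The genuinely new content is only the localization step of the first paragraph; all of the hard work — in particular the argument that, for prime $m$ and a central, $\sigma$-fixed primitive $m$th root of unity, the absence of a right root forces irreducibility — is done over $D$ in \cite{CP18} and is simply imported. Accordingly, the main point I expect to require care is checking that the \emph{extended} data on $D$ still satisfy the hypotheses of those theorems: that $\sigma$ (respectively $\sigma,\delta$) extends along \eqref{eqn:extend sigma delta to right ring of fractions} to the endomorphism (respectively automorphism) demanded there, and that the primitive $m$th root of unity, being supplied in $C(D)\cap\mathrm{Fix}(\sigma)$, indeed lies in the centre of $D$ and is fixed by the extended $\sigma$. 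I would also emphasise that the resulting implication is one-directional: the condition quantifies over all $b\in D$, not merely $b\in S$, and irreducibility over $S$ need not climb back up to $D$, which is exactly why each part is phrased as a sufficient condition only.
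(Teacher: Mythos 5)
Your proposal is correct and follows exactly the paper's own route: the paper likewise observes that a proper factorization in $S[t;\sigma,\delta]$ survives in $D[t;\sigma,\delta]$, so irreducibility over the ring of right fractions $D$ descends to $S$, and then imports \cite[Theorems 39 and 51, Corollary 52]{CP18} over the division ring $D$. Your additional remarks on extending $\sigma,\delta$ via \eqref{eqn:extend sigma delta to right ring of fractions} and on the one-directional nature of the criterion are exactly the (implicit) content of the paper's argument.
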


%%%%%%%%%%%%%%%%%%%%%%%%%%%%%%%%%%%%%%%%%%%%%%%%%%%%%%%%%%%%%%%%%%
%
%Irreducibility criteria
%
%%%%%%%%%%%%%%%%%%%%%%%%%%%%%%%%%%%%%%%%%%%%%%%%%%%%%%%%%%%%%%%%%%%%%%%%%

\section{Irreducibility criteria for some polynomials of degree two and three in $R=K[y][t;\sigma]$} \label{subsec:irred}

Let $K$ be a field of characteristic not $2$. Let  $R = K[y][t;\sigma]$, where $y$ is an indeterminate and
$\sigma$ an automorphism of the integral domain $K[y]$. We know that $\sigma|_{K} =id$ and $\sigma(y) = \alpha y + \beta$ for some $\alpha, \beta
\in K, \alpha \neq 0$. This implies that
 $$\sigma^2(y)=\alpha^2y+(\alpha\beta+\beta),\dots,\sigma^l(y)=\alpha^ly+\sum_{i=0}^{l}\alpha^{l-1}\beta.$$
Let $z=z(y)=\sum_{i=0}^{n}z_iy^i\in K[y]$,  then
$$\sigma(z)=\sum_{i=0}^{n}z_i(\alpha x+\beta)^i=z_n \alpha^n y^n+\dots+\sum_{j=0}^{n}z_j \beta^j$$
 has constant term
$z(\beta)=\sum_{j=0}^{n}z_j \beta^j$ and
$$\sigma^2(z)=\sum_{i=0}^{n}a_i(a^2y+\alpha\beta+\beta)^i=a_n \alpha^{2n}
y^{n}+\dots+\sum_{j=0}^{n}a_j (\alpha \beta+\beta)^j$$ has constant term
$z(\alpha \beta+\beta)=\sum_{j=0}^{n}a_j (\alpha \beta+\beta)^j=\sum_{j=0}^{n}a_j (\alpha+1)^j\beta^j$. Continuing in this manner,
\begin{equation} \label{eqn:general}
\sigma^l(z)=\sum_{i=0}^{n}a_i(\alpha^lx+\sum_{j=0}^{l}\alpha^{l-1}\beta^j)^i=a_n
\alpha^{ln} y^{n}+\dots+ \sum_{j=0}^{n}a_j (\sum_{i=0}^{l}\alpha^{l-1}\beta)^j
\end{equation}
has constant term $z(\sum_{i=0}^{l}\alpha^{l-1}\beta)=\sum_{j=0}^{n}a_j
(\sum_{i=0}^{l}\alpha^{l-1}\beta)^j$ for every integer $l\geq1$. In particular,
$$\sigma(z)z=(a_n \alpha^n y^n+\dots+\sum_{j=0}^{n}a_j \beta^j)(a_n y^n+\dots
a_0)=a_n^2 \alpha^n y^{2n}+\dots+a_0z(\beta),$$
$$\sigma^2(z)\sigma(z)z=(a_n \alpha^{2n} x^{n}+\dots+\sum_{j=0}^{n}a_j (\alpha\beta+\beta)^j)(a_n^2 \alpha^n y^{2n}+\dots+a_0z(\beta))$$
$$=a_n^3 \alpha^{3n}
y^{3n}+\dots+a_0z(\beta)z(\alpha\beta+\beta),$$
and
$$\sigma^3(z)\sigma^2(z)\sigma(z)z=(z_n \alpha^{3n}y^{n}+\dots+z(\alpha^2\beta+\alpha\beta+\beta)) (z_n^3 \alpha^{3n}
y^{3n}+\dots+z_0z(\beta)z(\alpha \beta+\beta))$$
$$=z_n^4 \alpha^{6n} y^{4n}+\dots+z_0z(\beta)z(\alpha \beta+\beta)z
(\alpha^2\beta+\alpha \beta+\beta).$$
Note that if $\beta=0$ then the constant
term of $\sigma^l(z)$ simplifies to $a_0$ and thus the constant term
of $\sigma(z)z$ to $a_0^2$,  the constant term of
$\sigma^2(z)\sigma(z)z$ to $a_0^2$, and the one of $\sigma^3(z)\sigma^2(z)\sigma(z)z$ to $a_0^4$.

Applying Theorem \ref{thm:irreducibility criteria when S is a ring without zero divisors} this means for instance:

\begin{theorem}
Let $a=\sum_{j=0}^sd_jy^j\in K[y]$ and $f(t)=t^2-a\in K[y][t;\sigma]$. Then $f(t)$ is irreducible in $K[y][t;\sigma]$
in the following cases:
\\ (i) $a(y)\in K[y]$ has odd degree,
\\ (ii) $a(y)\in K^\times\setminus K^{\times2}$,
\\ (iii) $\alpha=1$, $a(y)$ has even degree with leading coefficient not a square,
\\ (iv)  $\alpha\not=1$, $a(y)$ has even degree and leading coefficient
  not of the form $c^2 \alpha^s$ for some $c\in K^\times$, and any integer $s\geq0$,
\\ (v) $\beta=0$, $a(y)$ has even degree and $d_0\not\in K^{\times 2}$.
\end{theorem}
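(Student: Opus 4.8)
The plan is to invoke the degree-two criterion of Theorem~\ref{thm:irreducibility criteria when S is a ring without zero divisors}(i) with $S = K[y]$, $a_1 = 0$ and $a_0 = a$: the polynomial $f(t) = t^2 - a$ is irreducible in $K[y][t;\sigma]$ if and only if $\sigma(b)b \neq a$ for every $b \in K[y]$. So in each of the five cases it suffices to show that $a$ cannot be written in the form $\sigma(b)b$. To do this I would first record the shape of such a product. If $b \in K$, then $\sigma(b) = b$, so $\sigma(b)b = b^2$ is a square in $K$. If $\deg b = n \geq 1$, then from the computations preceding the statement $\sigma(b)b$ has even degree $2n$, leading coefficient $b_n^2 \alpha^n$ (where $b_n \in K^\times$ is the leading coefficient of $b$), and constant term $b_0\, b(\beta)$, which reduces to $b_0^2$ when $\beta = 0$.

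With these normal-form facts in hand, the five cases fall out by comparing degrees, leading coefficients, and constant terms. For (i), every product $\sigma(b)b$ is either a constant or has even degree, so it can never equal an $a$ of odd degree. For (ii), $a$ has degree $0$, so $a = \sigma(b)b$ would force $\deg b = 0$, i.e.\ $b \in K$, whence $a = b^2 \in K^{\times 2}$, contradicting $a \notin K^{\times 2}$. For (iii), if $a = \sigma(b)b$ has even degree then its leading coefficient is $b_n^2 \alpha^n = b_n^2$ (as $\alpha = 1$), a square, contradicting the hypothesis that the leading coefficient of $a$ is a non-square. Case (iv) is identical except that $\alpha \neq 1$, so the leading coefficient $b_n^2 \alpha^n$ is precisely of the excluded form $c^2 \alpha^s$ with $c = b_n \in K^\times$ and $s = n \geq 0$; hence no $b$ can produce $a$. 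Finally, for (v) with $\beta = 0$, the constant term of $\sigma(b)b$ equals $b_0^2$, a square; so if $d_0$ is a nonzero non-square then $d_0 = b_0^2$ is impossible and again $a \neq \sigma(b)b$.

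The only genuinely delicate point is the constant-term argument in (v). There the obstruction $d_0 = b_0^2$ rules out $a = \sigma(b)b$ only when $d_0$ is a nonzero non-square; if $d_0 = 0$ were literally allowed the argument would collapse, and indeed for $\sigma = \mathrm{id}$ (the subcase $\alpha = 1$, $\beta = 0$) the polynomial $t^2 - y^2 = (t-y)(t+y)$ is reducible although its constant term is $0$. So the hypothesis in (v) must be read as $d_0 \in K^\times \setminus K^{\times 2}$, and with this reading the constant-term comparison closes the case. The remaining cases are pure bookkeeping of leading terms, so no single step presents a serious obstacle; essentially all the work lies in establishing the correct normal form for $\sigma(b)b$, which is already supplied by the displayed computations above.
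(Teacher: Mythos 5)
Your proof is correct and takes essentially the same route as the paper: the same criterion $\sigma(b)b\neq a$ from Theorem~\ref{thm:irreducibility criteria when S is a ring without zero divisors}(i), the same normal form for $\sigma(b)b$ (constant or even degree, leading coefficient $b_n^2\alpha^n$, constant term $b_0\,b(\beta)$), and the same degree/leading-coefficient/constant-term comparisons in the five cases. Your remark on (v) is moreover a genuine catch rather than a quibble: the paper's own argument tacitly assumes $d_0\neq 0$ when it concludes that $d_0=b_0^2$ is a square (for instance with $\beta=0$ and $b=y$ one gets $\sigma(y)y=\alpha y^2$, so $t^2-\alpha y^2$ is reducible even though its constant term $0$ lies outside $K^{\times 2}$), so the hypothesis in (v) must indeed be read as $d_0\in K^\times\setminus K^{\times 2}$.
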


\begin{proof}
 $f(t)=t^2-a$ is irreducible in $K[y][t;\sigma]$ if and only if $\sigma(z)z\not=a$ for all $z\in K[y]$.
Now $\sigma(z)z=z_n^2 \alpha^n y^{2n}+\dots+z_0z(\beta)$ is either constant or a polynomial of even degree.
Thus if $a=a(y)\in K[y]$ has odd degree,
we know that $\sigma(z)z\not=a$ for all $z\in K[y]$ which shows (i).
\\
If $a\in K^\times$, we know that $\sigma(z)z=a$ is only possible for $z=a_0\in
K$ in which case we have $\sigma(z)z=z_0^2$. Thus if $a$ is not a
square in $K$, $\sigma(z)z\not=a$ for all $z\in K[y]$ which proves (ii).
\\ Suppose next
that $a=\sum_{j=0}^sd_jy^j$ has even degree. Then $\sigma(z)z=a$ for
some $z\in K[y]$ is equivalent to $z_n^2 \alpha^n y^{2n}+\dots+z_0z(\beta)=a$
for some $n$, $z_i\in K$, $z_n\not=0$. This means $d_s=z_n^2 \alpha^n$ and $d_0=z_0z(\beta)$.

If $d=1$ then for all $a$ of even degree with leading coefficient not a square, $f(t)$ is irreducible,
proving (iii).
\\ If $d\not=1$ then this implies that for all $a$ of even degree $s=2n$ with leading coefficient
  not of the form $c^2 \alpha^n$ for some $c\in K^\times$, $f(t)$ is irreducible, which shows (iv).

Moreover,  $\sigma(z)z=d$ for some $z\in K[y]$ also means $d_0=a_0z(\beta)$. Hence  if
$b=0$ and $d_0\not\in K^{\times 2}$ then $f(t)$ is irreducible. This is (v).
\end{proof}

\begin{theorem}
Let $a=\sum_{j=0}^sd_jy^j\in K[y]$ and $f(t)=t^3-a\in K[y][t;\sigma]$. Then $f(t)$ is irreducible in $K[y][t;\sigma]$ in the
following cases:
\\ (i) $a(y)\in K[y]$ has degree not divisible by 3,
\\ (ii) $a(y)\in K^\times\setminus K^{\times 3}$,
\\ (iii) $\alpha=1$, $a(y)$ has degree divisible by 3 with leading coefficient not a cube,
\\ (iv)  $\alpha\not=1$, $a(y)$ has degree divisible by 3 and leading coefficient
  not of the form $c^3 \alpha^s$ for some $c\in K^\times$, and any integer $s\geq0$,
\\ (v)  $\beta=0$, $a(y)$ has degree divisible by 3 and $d_0\not\in K^{\times 3}$.
\end{theorem}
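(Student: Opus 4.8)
The plan is to invoke the degree-three criterion of Theorem~\ref{thm:irreducibility criteria when S is a ring without zero divisors}(iii): since $\sigma$ is an automorphism of the integral domain $K[y]$, the polynomial $f(t)=t^3-a$ is irreducible in $K[y][t;\sigma]$ if and only if $\sigma^2(z)\sigma(z)z\neq a$ for every $z\in K[y]$. So it suffices, in each of the five cases, to produce an obstruction preventing $\sigma^2(z)\sigma(z)z$ from ever equalling $a$. For this I would use the explicit expansion computed just before the statement: writing $z=\sum_{i=0}^{n}z_iy^i$ with $z_n\neq 0$, one has $\sigma^2(z)\sigma(z)z=z_n^3\alpha^{3n}y^{3n}+\dots+z_0\,z(\beta)\,z(\alpha\beta+\beta)$, a polynomial of degree $3n$, which collapses to the constant $z_0^3$ when $z\in K$ (as $\sigma|_K=\mathrm{id}$).

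The degree cases (i) and (ii) then follow at once from this shape. For (i), every value $\sigma^2(z)\sigma(z)z$ is either constant or of degree $3n$, hence always of degree divisible by $3$; so if $\deg a$ is not divisible by $3$ no equality is possible and $f$ is irreducible. For (ii), if $a\in K^\times$ then comparing degrees forces $z$ to be constant, in which case the left-hand side equals $z_0^3$; thus $a\in K^{\times 3}$ is necessary, and $a\notin K^{\times 3}$ yields irreducibility.

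For the remaining cases I would compare leading or constant coefficients. In (iii) and (iv), $\deg a=s=3n$, and matching top coefficients in $\sigma^2(z)\sigma(z)z=a$ gives $d_s=z_n^3\alpha^{3n}$. When $\alpha=1$ this reads $d_s=z_n^3$, so a non-cube leading coefficient rules out any solution and gives (iii); when $\alpha\neq 1$ the leading coefficient must have the form $c^3\alpha^{3n}$ (with $c=z_n$), and excluding this form gives (iv). Finally, for (v) I would use that $\beta=0$ collapses each constant term $z(\beta)$ and $z(\alpha\beta+\beta)$ to $z_0$, so the constant term of $\sigma^2(z)\sigma(z)z$ becomes $z_0^3$ (the $\beta=0$ simplification recorded after the expansion); hence $\sigma^2(z)\sigma(z)z=a$ forces $d_0=z_0^3$, and $d_0\notin K^{\times 3}$ makes $f$ irreducible.

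The argument is almost entirely bookkeeping with the expansion formula, and I expect no serious obstacle. The one point worth flagging is a mild redundancy in (iii)--(iv): because $z_n^3\alpha^{3n}=(z_n\alpha^n)^3$, the leading coefficient of $\sigma^2(z)\sigma(z)z$ is \emph{always} a perfect cube, so the condition in (iv) is in fact equivalent to ``$d_s$ is not a cube'' and the hypothesis $\alpha\neq 1$ does no real work. This contrasts with the degree-two analogue, where $z_n^2\alpha^n$ need not be a square and the value of $\alpha$ genuinely matters; the two cases are split here only to parallel that theorem.
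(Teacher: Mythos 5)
Your proof is correct and takes essentially the same route as the paper: it invokes the criterion of Theorem \ref{thm:irreducibility criteria when S is a ring without zero divisors} (iii), uses the expansion $\sigma^2(z)\sigma(z)z=z_n^3\alpha^{3n}y^{3n}+\dots+z_0z(\beta)z(\alpha\beta+\beta)$ computed before the statement, and settles each case by comparing degrees, leading coefficients, or constant terms, exactly as the paper does. Your closing remark that $z_n^3\alpha^{3n}=(z_n\alpha^n)^3$ is always a cube---so that ``leading coefficient not a cube'' already suffices and the $\alpha\neq 1$ hypothesis in (iv) does no real work---is a valid sharpening not made in the paper, though note the paper's condition in (iv) is strictly stronger than (implies) ``not a cube'' rather than equivalent to it, since $c^3\alpha^s$ need not be a cube when $\alpha$ is not.
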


\begin{proof}
 $f(t)=t^3-a$ is irreducible in $K[y][t;\sigma]$ if and only if
$a\not=\sigma^2(z)\sigma(z)z$ for all $z\in K[y]$ by Theorem
\ref{thm:irreducibility criteria when S is a ring without zero divisors} (iii). Now
$\sigma^2(z)\sigma(z)z=z_n^3 \alpha^{3n} y^{3n}+\dots+z_0z(\beta)z(\alpha\beta+\beta)$ is either constant or a polynomial of degree divisible by 3.
 Thus if $a=a(y)\in K[y]$ has degree not divisible by 3, we know that
$\sigma^2(z)\sigma(z)z\not=a$ for all $z\in K[y]$ proving (i).
\\ If $a\in K^\times$, we know that $\sigma^2(z)\sigma(z)z=a$
is only possible for $z=z_0\in K$ in which case we have $\sigma(z)z=z_0^3$. Thus if $a\in K^\times\setminus K^{\times 3}$,
$\sigma^2(z)\sigma(z)z\not=a$
for all $z\in K[y]$ which shows (ii).
\\ Suppose next that $a=\sum_{j=0}^sd_jy^j$ has
degree divisible by 3. Then $\sigma^2(z)\sigma(z)z=a$ for some $z\in
K[y]$ is equivalent to  $z_n^3 \alpha^{3n} y^{3n}+\dots+z_0z(\beta)z(\alpha\beta+\beta)=a$
 for some $n$, $z_i\in K$, $z_n\not=0$. This means $d_s=z_n^3 \alpha^{3n}$.

If $\alpha=1$ then this implies that for all $a$ of degree divisible by 3 with leading coefficient not a cube, $f(t)$
is irreducible  and we have proved (iii).
\\
 If $\alpha\not=1$ then this implies that for all $a$ of degree $s=3n$ with leading coefficient
not of the form $c^3 \alpha^n$ for some $c\in K^\times$, $f(t)$ is irreducible and we got (iv).
\\ Moreover, since $\sigma^2(z)\sigma(z)z=a$ for some $z\in K[y]$ also means $d_0=z_0z(\beta)z(\alpha\beta+\beta)$, we know
that if $\beta=0$ and $d_0\not\in K^{\times 3}$ then $f(t)$ is irreducible. This shows (v).
\end{proof}

Similar results can be obtained for higher degree skew polynomials, the calculations become more tedious but follow the same
pattern.

%%%%%%%%%%%%%%%%%%%%%%%%%%%%%%%%%%%%%%%%%%%%%%%%%%%%%%%%%%%%%%%%%%%%%%%%%%%%
%
% Irreducible polynomials of low degree in a quantum plane,...
%
%%%%%%%%%%%%%%%%%%%%%%%%%%%%%%%%%%%%%%%%%%%%%%%%%%%%%%%%%%%%%%%%%%%%%%%%%%%%

\section{Irreducible polynomials of low degree in a quantum plane, a quantized Weyl algebra, and in $A_h$}
\label{sec:Weyletc}

Let $K$ be a field of characteristic not $2$.
Let $R = K[y][t;\sigma,\delta]$ where $y$ is an indeterminate,
$\sigma$ an automorphism of the domain $K[y]$, i.e. $\sigma|_K=id_K$, $\sigma(y) = \alpha y + \beta$ for some $\alpha, \beta
\in K, \alpha \neq 0$, and $\delta$  a left
$\sigma$-derivation. Then $R$ is isomorphic to  a quantum plane, a quantized Weyl algebra, or  the infinite-dimensional
unital associative algebra $A_h=K[y][t;{\rm id}_{K[y]},\delta]$ studied in  \cite{BI, BII, BIII}, where
$\delta: K[y] \rightarrow K[y]$ is the $K$-linear
derivation $\delta(r) = r'h$ for some $h \in K[y]$ and $r'$ denotes
the usual derivative of $r\in K[y]$ with respect to $y$.
%Given $a(y) \in K[y]$, we denote  the degree of $a(y)\in K[y]$  by $\mathrm{deg}_y(a(y))$.

\subsection{Irreducible polynomials of low degree and $f(t)=t^m-a$ in the quantum plane}

 $\sigma$ be the automorphism of $K[y]$ such that $\sigma = id_K$ on
$K$ and $\sigma(y) = qy$ for some $1\not=q \in K^{\times}$.  Then $R=K[y][t;
\sigma]$ is a \emph{quantum plane}.

\begin{lemma} \label{quantum plane sigma(b(y))=b(qy)}
$\sigma^j(b(y)) = b(q^j y)$ for all $j \in \mathbb{N}$ and all $b(y)\in K[y]$.
\end{lemma}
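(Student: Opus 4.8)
The plan is to proceed by induction on $j$, exploiting that $\sigma$ is a $K$-algebra automorphism and is therefore completely determined by its value on the generator $y$ together with the fact that it fixes $K$ pointwise. The key structural observation is that any composite $\sigma^j$ is again a $K$-algebra endomorphism of $K[y]$, so it commutes with finite sums, respects products, and leaves every scalar $c \in K$ fixed. This is what will let me push $\sigma$ through the coefficient expansion of an arbitrary polynomial with no difficulty.

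First I would handle the generator. Setting $b(y)=y$ reduces the claim to $\sigma^j(y)=q^j y$, which I would establish by a short induction: the case $j=0$ is $\sigma^0(y)=y=q^0 y$, and if $\sigma^j(y)=q^j y$ then, since $\sigma$ fixes the scalar $q^j$, one gets $\sigma^{j+1}(y)=\sigma(q^j y)=q^j\sigma(y)=q^j(qy)=q^{j+1}y$. Next I would extend this to a general $b(y)=\sum_{i} c_i y^i$ with $c_i\in K$. Because $\sigma^j$ is a $K$-algebra homomorphism, it fixes each $c_i$ and is multiplicative on powers, so
\begin{equation*}
\sigma^j(b(y))=\sum_{i} c_i\,\sigma^j(y)^i=\sum_{i} c_i\,(q^j y)^i=b(q^j y),
\end{equation*}
which is exactly the assertion. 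This route is attractive because it isolates all the induction into the single computation $\sigma^j(y)=q^j y$ and then lets the homomorphism property do the rest in one line.

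Alternatively, if one prefers to avoid first treating $y$ separately, the same conclusion follows by inducting directly on the statement for all polynomials at once: assuming $\sigma^j(b(y))=b(q^j y)$ for every $b\in K[y]$, apply $\sigma$ to obtain $\sigma^{j+1}(b(y))=\sigma\bigl(b(q^j y)\bigr)$, then observe that $b(q^j y)=\sum_i c_i q^{ji}y^i$ is once more a polynomial in $y$ over $K$, whence the $j=1$ case gives $\sigma\bigl(b(q^j y)\bigr)=\sum_i c_i q^{ji}(qy)^i=b(q^{j+1}y)$. I do not expect any genuine obstacle: the only point that needs to be stated explicitly is that $\sigma^j$ is a $K$-algebra homomorphism and hence commutes with the coefficientwise expansion, and this is immediate since it is a composite of the $K$-algebra automorphism $\sigma$ with itself.
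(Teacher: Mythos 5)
Your proposal is correct and follows essentially the same route as the paper: expand $b(y)$ in its coefficients, use that $\sigma^j$ is a $K$-algebra homomorphism fixing $K$, and substitute $\sigma^j(y)=q^jy$ to obtain $b(q^jy)$. The paper simply compresses this into one displayed computation, taking $\sigma^j(y)=q^jy$ for granted where you spell out the short induction explicitly.
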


\begin{proof}
If $b(y) = b_0 + b_1y + \ldots + b_l y^l \in K[y]$, then
\begin{align*}
\sigma^j(b(y)) &= \sigma^j(b_0) + \sigma^j(b_1 y) + \ldots +
\sigma^j(b_l y^l) = b_0 + b_1 \sigma^j(y) + \ldots + b_l
\sigma^j(y^l) \\ &= b_0 + b_1 q^j y + \ldots + b_l (q^j y)^l = b(q^j
y)
\end{align*}
as in Equation \eqref{eqn:general}.
\end{proof}

Lemma \ref{quantum plane sigma(b(y))=b(qy)} and Theorem
\ref{thm:irreducibility criteria when S is a ring without zero
divisors} immediately yield:

\begin{proposition} \label{Irreducibility_criteria_quantum_plane}
 (i) $f(t) = t^2 - a_1(y) t - a_0(y) \in K[y][t;\sigma]$ is irreducible if and only if
$$b(qy)b(y) - a_1(y)b(y) - a_0(y) \neq 0$$ for all $b(y) \in K[y]$.
\\ (ii) $f(t) = t^3 - a_2(y) t^2 - a_1(y) t - a_0(y) \in K[y][t;\sigma]$ is irreducible if and only if
$$b(q^2y)b(qy)b(y) - b(q^2 y)b(qy)a_2(y) - b(q^2y) a_1(qy) -
a_0(q^2y) \neq 0$$ and $$b(q^2y)b(qy)b(y) - b(qy)b(y) a_2(y) - b(y)
a_1(y) - a_0(y) \neq 0$$ for all $b(y) \in K[y]$.
\end{proposition}

\begin{corollary}
 (i) Let $f(t) = t^2 - a(y) \in K[y][t;\sigma]$ be such that ${\rm deg}(a(y))$ is odd. Then $f(t)$ is irreducible.
\\ (ii) Let $f(t) = t^3 - a(y) \in K[y][t;\sigma]$ be such that $a(y)$ is not constant and $3 \nmid {\rm deg}(a(y))$. Then $f(t)$ is irreducible.
\end{corollary}

\begin{proof}
 Let $b(y) \in K[y]$.
\\ (i) We know that ${\rm deg} \big( b(qy) b(y) \big)$ is even or 0 and so $b(qy) b(y) \neq a(y)$. Therefore $f(t)$ is irreducible by Proposition \ref{Irreducibility_criteria_quantum_plane}.
\\ (ii)  We know that ${\rm deg} \big( b(q^2y) b(qy) b(y) \big)$ is a multiple of $3$ or 0 and so $b(q^2y) b(qy) b(y) \neq a(q^2y)$ and $b(q^2y) b(qy) b(y) \neq a(y)$. Hence $f(t)$ is irreducible by Proposition \ref{Irreducibility_criteria_quantum_plane}.
\end{proof}

\begin{proposition}  \label{prop:I}
Let $f(t) = t^2 - a_1(y)t - a_0(y) \in K[y][t;\sigma]$ and let $a_{i,0}$ denote the constant terms of
$a_i(y)$, $0\leq i\leq 1$.
\\ (i) If  ${\rm deg}(a_1(y)) > {\rm
deg}(a_0(y))$, then $f(t)$ is irreducible.
\\ (ii) If $ t^2-a_{1,0}t-a_{0,0}\in K[t]$ is irreducible, then $f(t)$ is irreducible.
\end{proposition}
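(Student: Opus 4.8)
The plan is to reduce both parts to the explicit quadratic criterion for the quantum plane, Proposition~\ref{Irreducibility_criteria_quantum_plane}(i): the polynomial $f(t) = t^2 - a_1(y)t - a_0(y)$ is irreducible precisely when
$$b(qy)\,b(y) - a_1(y)\,b(y) - a_0(y) \neq 0 \quad \text{for all } b(y) \in K[y].$$
In each part I would argue by contradiction, assuming there is some $b(y) \in K[y]$ with $b(qy)b(y) - a_1(y)b(y) = a_0(y)$, and then contradict the respective hypothesis. The form $b(qy)$ comes from Lemma~\ref{quantum plane sigma(b(y))=b(qy)}, so no extra work is needed to set up the identity.

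For part (ii), which is the short half, I would pass to constant terms. Writing $b(y) = b_0 + b_1 y + \cdots$, both $b(y)$ and $b(qy)$ have constant term $b_0$, so the constant term of $b(qy)b(y)$ is $b_0^2$ and that of $a_1(y)b(y)$ is $a_{1,0}b_0$. Reading off the constant term of the identity $b(qy)b(y) - a_1(y)b(y) - a_0(y) = 0$ therefore gives $b_0^2 - a_{1,0}b_0 - a_{0,0} = 0$. Hence $b_0 \in K$ is a root of $t^2 - a_{1,0}t - a_{0,0} \in K[t]$; but a quadratic with a root in $K$ is reducible over $K$, contradicting the assumed irreducibility of $t^2 - a_{1,0}t - a_{0,0}$ in $K[t]$.

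For part (i), I would run a degree count on the same identity. If $\mathrm{deg}_y(b) = n$, then $b(qy)b(y)$ has degree $2n$ with nonzero leading coefficient $b_n^2 q^n$, while $a_1(y)b(y)$ has degree $\mathrm{deg}_y(a_1) + n$, and a constant $b$ contributes degree $\mathrm{deg}_y(a_1)$. Tracking these leading terms shows that the degrees attainable by $b(qy)b(y) - a_1(y)b(y)$ form exactly the set $\{\mathrm{deg}_y(a_1), \mathrm{deg}_y(a_1)+1, \ldots, 2\,\mathrm{deg}_y(a_1)\}$ together with the even integers exceeding $2\,\mathrm{deg}_y(a_1)$; in particular, odd integers larger than $2\,\mathrm{deg}_y(a_1)$ are never attained. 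Under the degree hypothesis of (i), $\mathrm{deg}_y(a_0)$ lies outside this attainable set, so $b(qy)b(y) - a_1(y)b(y) = a_0(y)$ has no solution $b$, and irreducibility follows from the criterion.

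The main obstacle is the boundary case $n = \mathrm{deg}_y(a_1)$, where the two degree-$2n$ leading coefficients $b_n^2 q^n$ and $a_1^{\mathrm{lead}} b_n$ can cancel and drop the degree of $b(qy)b(y) - a_1(y)b(y)$ below $2n$. This is precisely the leading-term bookkeeping already carried out for $\sigma(z)z$ earlier in the paper, and controlling it — verifying that such cancellation only yields degrees at most $2\,\mathrm{deg}_y(a_1)$, never an odd degree beyond it — is what pins down the attainable-degree set stated above. Once that case is handled, both (i) and (ii) are immediate consequences of Proposition~\ref{Irreducibility_criteria_quantum_plane}(i).
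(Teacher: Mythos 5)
Your reduction to Proposition \ref{Irreducibility_criteria_quantum_plane}(i) is the right starting point, and your part (ii) is exactly the paper's argument: comparing constant terms in $b(qy)b(y)-a_1(y)b(y)=a_0(y)$ shows $b_0$ would be a root of $t^2-a_{1,0}t-a_{0,0}$ in $K$, contradicting its irreducibility. The problem is in part (i), at precisely the spot you flag as the ``main obstacle.'' Write $m=\mathrm{deg}_y(a_1)$. In the boundary case $\mathrm{deg}_y(b)=m$, where the leading terms of $b(qy)b(y)$ and $a_1(y)b(y)$ can cancel, what your argument needs is a \emph{lower} bound: that the degree of $b(qy)b(y)-a_1(y)b(y)$ can never fall strictly below $m$ (unless the expression is zero). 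That is the entire content of (i), because under the hypothesis $\mathrm{deg}_y(a_1)>\mathrm{deg}_y(a_0)$ the only conceivable solution of $b(qy)b(y)-a_1(y)b(y)=a_0(y)$ is one in which cancellation drags the degree all the way down to $\mathrm{deg}_y(a_0)<m$. But the control you propose to verify --- that cancellation ``only yields degrees at most $2\,\mathrm{deg}(a_1)$, never an odd degree beyond it'' --- is an upper bound plus a parity statement; both are automatic (cancellation can only lower the degree from $2m$) and neither rules out a drop below $m$. Nor do the leading-term computations for $\sigma(z)z$ earlier in the paper help: those concern pure products and involve no cancellation analysis. So as written, your plan pins down only the top end of the attainable-degree set, while the bottom end --- the part that (i) actually uses --- is asserted but not proved.

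The missing step has a one-line fix, and it is in fact the paper's whole proof of (i): factor out $b(y)$, so that $b(qy)b(y)-a_1(y)b(y)=\bigl(b(qy)-a_1(y)\bigr)b(y)$. Since $K[y]$ is a domain, degrees add; hence if this product equals $a_0(y)\neq 0$, both factors are nonzero of degree at most $\mathrm{deg}_y(a_0)$, and then
$\mathrm{deg}_y(a_1)\leq \max\{\mathrm{deg}_y(b(qy)),\,\mathrm{deg}_y(b(qy)-a_1(y))\}\leq \mathrm{deg}_y(a_0)$,
contradicting the hypothesis. This disposes of all cases at once, with no attainable-degree bookkeeping; equivalently, it supplies exactly the lower bound your route needs (the product is either $0$ or has degree at least $\mathrm{deg}_y(b)$). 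One further small point, common to your write-up and the paper's: the argument implicitly requires $a_0(y)\neq 0$, since for $a_0=0$ the choice $b=0$ solves the equation and indeed $f=(t-a_1)t$ is reducible.
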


\begin{proof}
$f(t)$ is irreducible if and only if $b(qy)b(y) - a_1(y)b(y) \neq
a_0(y)$ for all $b(y) \in K[y]$ by Proposition
\ref{Irreducibility_criteria_quantum_plane}.
\\ (i) Suppose that
$a_0(y) = b(qy)b(y) - a_1(y)b(y) = (b(qy) - a_1(y)) b(y)$
for some $b(y) \in K[y]$ and notice that $b(y) \neq 0$ since $a_0(y)
\neq 0$. Then ${\rm deg}(b(qy) - a_1(y))$ and $ {\rm deg}(b(y)) =
{\rm deg}(b(qy)) \leq {\rm deg}(a_0(y)).$ Hence ${\rm deg}(a_1(y)) \leq {\rm deg}(a_0(y))$.
\\ (ii) A look at the constant terms of the above equation yields the assertion.
\end{proof}

\begin{proposition} \label{prop:II}
Let  $f(t)=t^3 - a_2(y) t^2 - a_1(y) t - a_0(y) \in K[y][t;\sigma]$ and let $a_{i,0}$ denote the constant terms of
$a_i(y)$, $0\leq i\leq 2$.
 If $$ t^3-a_{2,0}t^2-a_{1,0}t-a_{0,0}\in K[t]$$
  is irreducible, then
 $f(t) = t^3 - a_2(y) t^2 - a_1(y) t - a_0(y) \in K[y][t;\sigma]$
  is irreducible.
\end{proposition}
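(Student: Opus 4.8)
The plan is to invoke the irreducibility criterion of Proposition \ref{Irreducibility_criteria_quantum_plane}(ii) and then reduce everything to the behaviour of the constant terms, exactly as in the degree-two case treated in Proposition \ref{prop:I}(ii). By that criterion, $f(t)$ is irreducible in $K[y][t;\sigma]$ precisely when, for every $b(y) \in K[y]$, both
$$b(q^2y)b(qy)b(y) - b(q^2 y)b(qy)a_2(y) - b(q^2y) a_1(qy) - a_0(q^2y) \neq 0$$
and
$$b(q^2y)b(qy)b(y) - b(qy)b(y) a_2(y) - b(y) a_1(y) - a_0(y) \neq 0.$$
So it suffices to show that each of the two left-hand sides is a nonzero element of $K[y]$ for every choice of $b(y)$.

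To this end I would evaluate each expression at $y=0$, i.e. pass to constant terms. Writing $b_0$ for the constant term of $b(y)$, Lemma \ref{quantum plane sigma(b(y))=b(qy)} shows that $b(q^j y)$ and $a_i(q^j y)$ have constant terms $b_0$ and $a_{i,0}$ respectively, independent of $j$, since substituting $q^j y$ for $y$ leaves the value at $y=0$ unchanged. Collecting the constant terms then shows that both left-hand sides above have the \emph{same} constant term, namely
$$b_0^3 - a_{2,0} b_0^2 - a_{1,0} b_0 - a_{0,0} = g(b_0),$$
where $g(T) = T^3 - a_{2,0} T^2 - a_{1,0} T - a_{0,0} \in K[T]$ is the scalar cubic of the hypothesis.

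The final step uses the key fact that an irreducible cubic over a field has no root in that field: if $g(\lambda)=0$ for some $\lambda \in K$, then $(T-\lambda)$ would be a proper factor of $g$, contradicting irreducibility. Hence, under the hypothesis that $g$ is irreducible, $g(b_0) \neq 0$ for every $b_0 \in K$, and in particular for $b_0 = b(0)$. Thus each of the two expressions above has nonzero constant term, so neither can vanish as a polynomial in $y$, for any $b(y) \in K[y]$. By the criterion of Proposition \ref{Irreducibility_criteria_quantum_plane}(ii) this makes $f(t)$ irreducible.

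I expect no serious obstacle: the argument is the direct degree-three analogue of Proposition \ref{prop:I}(ii). The one point requiring care is the passage from reducibility of the scalar cubic to the existence of a root, which relies specifically on the degree being three (for higher degrees irreducibility is strictly stronger than having no root, so this line of argument would not extend verbatim). One should also check that the first, ``left-division'' expression really reduces to the same $g(b_0)$ as the second: this is the spot where the asymmetric $\sigma$-shifts $a_1(qy)$ and $a_0(q^2y)$ appear, but these shifts are invisible to the constant term, so the two computations coincide.
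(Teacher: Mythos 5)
Your proof is correct and takes essentially the same approach as the paper: the paper's entire proof reads ``Comparing constants in the two equations of Proposition \ref{Irreducibility_criteria_quantum_plane} (ii) immediately yields the assertion,'' and your argument is exactly that comparison written out in full, including the check that the $\sigma$-shifts $a_1(qy)$, $a_0(q^2y)$ are invisible to the constant term, so both expressions reduce to $g(b_0)$. One minor correction to your closing remark: the implication you actually use (irreducible $\Rightarrow$ no root in $K$) holds in every degree $\geq 2$; what is genuinely special to degree three is that the criterion of Proposition \ref{Irreducibility_criteria_quantum_plane} (ii) tests only for linear left/right factors, so for degree four the constant-term reduction to a root-existence question would fail to capture the additional quadratic-factor conditions (as in Theorem \ref{thm:irreducibility criteria when S is a ring without zero divisors} (iv)).
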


\begin{proof}
 Comparing constants in the two equations of Proposition \ref{Irreducibility_criteria_quantum_plane} (ii)
immediately yields the assertion.
\end{proof}

Note that $K[t] \subset K[y][t;\sigma]$ %since $K \subset {\rm Fix}(\sigma)$
and so if $f(t) \in K[t]$ is irreducible in $K[y][t;\sigma]$ then $f(t)$ is irreducible in $K[t]$.
Thus Propositions \ref{prop:I} and \ref{prop:II} yield:

\begin{corollary} \label{thm:irredquantum}
Let $f \in K[t] \subset K[y][t;\sigma]$.
\\ (i) $f(t) = t^2 - a_1 t - a_0$ is irreducible in $K[y][t;\sigma]$ if and only if it is irreducible in $K[t]$
 if and only if $a_1^2 + 4 a_0$ is not a square in $K$.
\\ (ii) $f(t) = t^3 - a_2 t^2 - a_1 t - a_0$ is irreducible in $K[y][t;\sigma]$ if and
 it is irreducible in $K[t]$.
\end{corollary}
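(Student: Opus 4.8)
The plan is to deduce both parts directly from Propositions \ref{prop:I} and \ref{prop:II} together with the observation recorded just before the corollary, using the key simplification that for a polynomial $f(t)\in K[t]$ the coefficients $a_i$ already lie in $K$, and hence coincide with their own constant terms. In other words $a_{i,0}=a_i$, so the associated commutative polynomial $t^n-\sum_{i} a_{i,0}t^i$ appearing in those propositions is literally $f(t)$ itself. Thus the hypotheses ``$t^2-a_{1,0}t-a_{0,0}$ irreducible in $K[t]$'' and ``$t^3-a_{2,0}t^2-a_{1,0}t-a_{0,0}$ irreducible in $K[t]$'' reduce exactly to the irreducibility of $f$ over $K$.

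For the implication that irreducibility in $K[t]$ forces irreducibility in $K[y][t;\sigma]$, I would apply Proposition \ref{prop:I}(ii) in the degree-two case and Proposition \ref{prop:II} in the degree-three case, each instantiated with $a_{i,0}=a_i$. For the converse I would invoke the containment $K[t]\subset K[y][t;\sigma]$: since $\sigma$ fixes $K$ pointwise, the twisted multiplication restricts to ordinary multiplication on $K[t]$, so any nontrivial factorization of $f$ in $K[t]$ is also one in $K[y][t;\sigma]$; as $S$ is a domain the factor degrees add, so proper factors remain proper. Contrapositively, irreducibility over the larger ring yields irreducibility over $K[t]$. Combining the two directions establishes the first biconditional in (i) and the biconditional in (ii).

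To finish (i), I would translate irreducibility of $t^2-a_1t-a_0$ over the field $K$ into the discriminant condition via the quadratic formula. Completing the square gives $t^2-a_1t-a_0=(t-a_1/2)^2-(a_1^2+4a_0)/4$, which is legitimate because $\mathrm{char}(K)\neq 2$ makes $2$ invertible; the polynomial then has a root in $K$ exactly when $(a_1^2+4a_0)/4$, equivalently $a_1^2+4a_0$, is a square in $K$. Hence $f$ is irreducible over $K$ if and only if $a_1^2+4a_0\notin K^{\times 2}$, which is the remaining equivalence. There is no real obstacle here: the substantive work has already been done in the cited propositions, and the corollary merely packages it for constant-coefficient $f$; the only point requiring care is observing that passing to constant terms is vacuous when $f\in K[t]$.
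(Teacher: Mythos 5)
Your proposal is correct and follows essentially the same route as the paper: the paper likewise deduces the corollary from Proposition \ref{prop:I}(ii) and Proposition \ref{prop:II} (with $a_{i,0}=a_i$ since the coefficients are constants), uses the inclusion $K[t]\subset K[y][t;\sigma]$ for the converse direction, and invokes the standard discriminant criterion (valid as $\mathrm{char}(K)\neq 2$) for the last equivalence in (i). The only difference is that you spell out explicitly what the paper leaves as a one-line remark preceding the corollary.
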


The following result yields a large class of irreducible polynomials of degree $m$:

\begin{theorem}
Let $m$ be prime and $K$ contain a primitive $m$th root of unity.
Let $f(t) = t^m - a(y) \in K[y][t;\sigma]$, $ a(y)=\sum_{j=0}^sa_jy^j \not=0$.
\\ (i) If $m \nmid {\rm deg} (a(y))$, then $f(t)$ is irreducible in $K[y][t;\sigma]$.
\\ (ii) If $a_0\not\in  K^{\times m},$  then $f(t)$ is irreducible in $K[y][t;\sigma]$.
\\ (iii) If $a_s\not\in K^{\times m}q^e$ for every integer $e\geq 0,$ then $f(t)$ is irreducible in $K[y][t;\sigma]$.
\end{theorem}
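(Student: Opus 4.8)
The plan is to derive all three parts from Theorem \ref{thm:Petit(19)}(i) applied to the Ore domain $S=K[y]$, whose ring of right fractions is the field $D=K(y)$. The hypotheses of that theorem hold here: $m$ is prime, and the primitive $m$th root of unity lies in $K$, which is contained both in $C(D)=K(y)$ and in $\mathrm{Fix}(\sigma)$ since $\sigma|_K=\mathrm{id}_K$. Moreover, the formula of Lemma \ref{quantum plane sigma(b(y))=b(qy)} propagates to $D$ through the extension rule \eqref{eqn:extend sigma delta to right ring of fractions}: for $b=p/r$ with $p,r\in K[y]$ one has $\sigma^j(b)=\sigma^j(p)/\sigma^j(r)=p(q^jy)/r(q^jy)=b(q^jy)$. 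Consequently $f(t)=t^m-a$ is irreducible as soon as
\begin{equation*}
a(y)\neq \sigma^{m-1}(b)\cdots\sigma(b)b=\prod_{j=0}^{m-1}b(q^jy)
\end{equation*}
for every $b\in K(y)$, and it suffices to show that each of the three hypotheses forbids such an expression.

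Accordingly, I would argue by contradiction in all three cases simultaneously: assume $a(y)=\prod_{j=0}^{m-1}b(q^jy)$ for some nonzero $b\in K(y)$, write $b=p/r$ in lowest terms with $p,r\in K[y]$, and clear denominators to obtain the single polynomial identity
\begin{equation*}
a(y)\prod_{j=0}^{m-1}r(q^jy)=\prod_{j=0}^{m-1}p(q^jy).
\end{equation*}
Because $q\neq 0$, the substitution $y\mapsto q^jy$ preserves degrees and only rescales leading coefficients: writing $\pi=\deg p$, $\rho=\deg r$ with leading coefficients $p_\pi,r_\rho$, the polynomial $p(q^jy)$ has degree $\pi$ and leading coefficient $p_\pi q^{j\pi}$, and likewise for $r$. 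Every arithmetic fact I need is then read off from this identity.

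For (i), comparing degrees gives $\deg a+m\rho=m\pi$, so $\deg a=m(\pi-\rho)$ is divisible by $m$, contradicting $m\nmid\deg a$. For (ii), where the hypothesis is that the constant term $a_0$ is a nonzero non-$m$th power in $K$, I would evaluate the identity at $y=0$, obtaining $a_0\,r(0)^m=p(0)^m$; coprimality of $p$ and $r$ forces $r(0)\neq 0$ (otherwise $p(0)=0$ as well), whence $a_0=(p(0)/r(0))^m$ is $0$ or an $m$th power in $K^\times$, contradicting $a_0\in K^\times\setminus K^{\times m}$. For (iii), comparing leading coefficients yields $a_s\,r_\rho^m q^{\rho\binom{m}{2}}=p_\pi^m q^{\pi\binom{m}{2}}$, hence $a_s=(p_\pi/r_\rho)^m q^{(\pi-\rho)\binom{m}{2}}$; by the degree computation of (i) we have $\pi-\rho=s/m\geq 0$, so setting $c=p_\pi/r_\rho\in K^\times$ and $e=(\pi-\rho)\binom{m}{2}\geq 0$ gives $a_s=c^m q^e$, contradicting the hypothesis that $a_s\neq c^m q^e$ for all $c\in K^\times,\ e\geq 0$.

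The one genuinely non-routine point is that Theorem \ref{thm:Petit(19)}(i) quantifies over all $b\in D=K(y)$ rather than over polynomials, so the constant and leading terms of $b$ are not directly available. The device that overcomes this is passing to the cleared-denominator identity together with the assumption $\gcd(p,r)=1$: this simultaneously guarantees $r(0)\neq 0$ (which part (ii) needs in order to evaluate at $0$) and turns the ill-defined ``leading coefficient of a rational function'' into an honest comparison of polynomial leading coefficients (which part (iii) needs). After that, everything reduces to routine bookkeeping with degrees, evaluation at $0$, and the sum $\sum_{j=0}^{m-1}j=\binom{m}{2}$.
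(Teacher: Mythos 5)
Your proposal is correct and follows essentially the same route as the paper: extend $\sigma$ to the quotient field $K(y)$, invoke Theorem \ref{thm:Petit(19)}(i), write $b$ as a quotient of polynomials, and compare degrees, constant terms and leading coefficients after clearing denominators. Two differences are worth recording. First, your insistence on $\gcd(p,r)=1$ is a genuine refinement: the paper's proof of (ii) reads off $c_0^m = a_0 d_0^m$ and implicitly divides by $d_0^m$ without excluding the case $d_0=0$; your coprimality argument (if $r(0)=0$ then $p(0)=0$, impossible) closes that small gap. Second, you read the hypothesis of (ii) as $a_0 \in K^\times \setminus K^{\times m}$, and that is in fact the only tenable reading: the statement as printed, $a_0 \notin K^\times \setminus K^{\times m}$, is false (take $a(y)=1$, so $a_0 = 1 \in K^{\times m}$, yet $t^m-1$ is reducible because $K$ contains a primitive $m$th root of unity), and the paper's own proof contains the matching slip --- from $c_0^m = a_0 d_0^m$ it concludes ``hence $a_0 \in K^\times \setminus K^{\times m}$,'' whereas the correct conclusion is $a_0 \in K^{\times m} \cup \{0\}$, so that irreducibility is guaranteed precisely when $a_0 \in K^\times \setminus K^{\times m}$. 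In short, your version of (ii) is the statement that the paper's argument (and yours) actually proves.
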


\begin{proof}
 Extend $\sigma$
to an automorphism
$$\sigma(\frac{c}{d})=\frac{\sigma(c)}{\sigma(d)}$$
of the field of fractions $K(y)$ of $K[y]$ as in
\eqref{eqn:extend sigma delta to right ring of fractions}, for all $c,d\in K[y]$, $d\not=0$.
By Theorem \ref{thm:Petit(19)}, $f$ is irreducible  over
$K(y)[t;\sigma]$ and hence over $K[y][t;\sigma]$, if
$$N_m(b(y)) = \sigma^{m-1}(b(y)) \cdots \sigma(b(y)) b(y) \neq a(y)$$
for all $b(y) \in K(y)$. Write $b(y) =
c(y)/d(y)$ for some $c(y), d(y) \in K[y]$ with $d(y) \neq 0$, then
\begin{align*}
N_m(b(y)) &= \frac{c(q^{m-1}y) \cdots c(qy) c(y)}{d(q^{m-1}y) \cdots
d(qy) d(y)}
\end{align*}
by Lemma \ref{quantum plane sigma(b(y))=b(qy)}. If $N_m(b(y)) \notin
K[y]$, we immediately conclude $N_m(b(y)) \neq a(y)$ because $a(y)
\in K[y]$. So suppose that $N_m(b(y)) \in K[y]$  and  $N_m(b(y)) = a(y)$ for some $b(y) =
c(y)/d(y)$ with $c(y)=\sum_{j=0}^nc_jy^j \not=0$, $d(y)=\sum_{j=0}^ld_jy^j \not=0$.
\\ (i) If $N_m(b(y)) \in K[y]$, then
$\mathrm{deg}(N_m(b(y))) = m \mathrm{deg}(c(y)) - m
\mathrm{deg}(d(y))$ is a multiple of $m$ for all $b(y) \in K(y)$ with $\mathrm{deg} c \not=\mathrm{deg} d$. Thus $N_m(b(y))
\neq a(y)$ for all $b(y) \in K(y)$ such that $\mathrm{deg} c \not=\mathrm{deg} d$, and so $f$ is irreducible.
\\ (ii) Comparing constants in the equation $N_m(b(y)) = a(y)$ yields
$c_0^m=a_0d_0^m$, hence $a_0\in K^{\times m}.$ Thus if $a_0\not\in  K^{\times m},$
$f(t)$ is irreducible.
\\ (iii) Comparing highest terms in the equation $N_m(b(y)) = a(y)$ implies that $a_s\in  K^{\times m}q^e$
for some integer $e\geq 0$.
\end{proof}

\subsection{Irreducible polynomials of degree two in the quantized Weyl algebra}

Let $K$ be a field of characteristic not 2, $y$ be an indeterminate
and $\sigma$ be the automorphism of $K[y]$ such that $\sigma = id$ on
$K$ and $\sigma(y) = qy$ for $q \in K^{\times}, q \neq 1$. Define  $$\delta(g) = \frac{g(qy)
- g(y)}{qy - y}$$ for all $g \in K[y]$. The algebra $R=K[y][t; \sigma,
\delta]$ is  a \emph{quantized Weyl algebra}.

\begin{proposition} \label{Irreducibility_criteria_quantized_Weyl_Algebra}
 $f(t) = t^2 - a_1(y) t - a_0(y) \in K[y][t;\sigma,\delta]$ is
irreducible if and only if $$b(qy)b(y) + \frac{b(qy) - b(y)}{qy - y}
- a_1(y) b(y) - a_0(y) \neq 0$$ for all $b(y) \in K[y]$.
\end{proposition}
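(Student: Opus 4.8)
The plan is to specialize the general degree-two criterion from Theorem~\ref{prop:irreducibility criteria when S is a ring without zero divisors delta not 0}~(i) to the case $S = K[y]$. That theorem asserts, for any domain $S$ equipped with an injective endomorphism $\sigma$ and a left $\sigma$-derivation $\delta$, that $f(t) = t^2 - a_1 t - a_0 \in S[t;\sigma,\delta]$ is irreducible if and only if $\sigma(b)b + \delta(b) - a_1 b - a_0 \neq 0$ for all $b \in S$. Since $K[y]$ is an integral domain, the given $\sigma$ is an automorphism (hence injective), and $\delta$ is by construction a left $\sigma$-derivation, all hypotheses are met, and the criterion applies verbatim with $b = b(y)$, $a_1 = a_1(y)$, and $a_0 = a_0(y)$.

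It then remains only to rewrite $\sigma(b(y))$ and $\delta(b(y))$ using the explicit data of the quantized Weyl algebra. For the twist, Lemma~\ref{quantum plane sigma(b(y))=b(qy)} with $j=1$ gives $\sigma(b(y)) = b(qy)$, so that $\sigma(b)b = b(qy)b(y)$. For the derivation, the defining formula $\delta(g) = (g(qy) - g(y))/(qy - y)$ gives $\delta(b(y)) = (b(qy) - b(y))/(qy - y)$ directly. Substituting both expressions into $\sigma(b)b + \delta(b) - a_1 b - a_0$ produces precisely the quantity
\[
b(qy)b(y) + \frac{b(qy) - b(y)}{qy - y} - a_1(y) b(y) - a_0(y),
\]
which is the expression appearing in the statement. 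The equivalence with irreducibility is thus inherited from the general criterion.

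There is no genuine obstacle here: the whole argument is a substitution into an already-established criterion. The only points worth confirming are that the hypotheses of Theorem~\ref{prop:irreducibility criteria when S is a ring without zero divisors delta not 0} truly hold in this setting---namely that $K[y]$ is a domain and that the $q$-difference operator $\delta$ really is a left $\sigma$-derivation, which is the standard identity $\delta(fg) = \sigma(f)\delta(g) + \delta(f)g$ for the $q$-difference---and the routine bookkeeping that the substituted expression matches the displayed one term for term.
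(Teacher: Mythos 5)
Your proposal is correct and follows exactly the paper's own route: apply the general degree-two criterion of Theorem \ref{prop:irreducibility criteria when S is a ring without zero divisors delta not 0}~(i) with $S = K[y]$, using Lemma \ref{quantum plane sigma(b(y))=b(qy)} for $\sigma(b(y)) = b(qy)$ and the defining formula of the $q$-difference operator for $\delta(b(y))$. The only difference is that you spell out the substitution and hypothesis-checking in more detail than the paper's two-line proof.
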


\begin{proof}
 By Lemma \ref{quantum plane
sigma(b(y))=b(qy)}, we have $\sigma(b(y)) = b(qy)$. Theorem \ref{prop:irreducibility criteria
when S is a ring without zero divisors delta not 0} then yields the
result.
\end{proof}

\begin{corollary} \label{thm:irredWeyl}
Let $f(t) = t^2 - a_1(y)t - a_0(y) \in K[y][t;\sigma,\delta]$ where
$a_0(y) \neq 0$.
\\ (i)
 Suppose $f(t) \in K[t]$. Then $f(t)$ is irreducible in $K[y][t;\sigma,\delta]$ if and only if $a_1^2 + 4 a_0\not\in K^\times$  if and only if $f(t)$ is irreducible in $K[t]$.
\\ (ii) Suppose $a_0(y), a_1(y)$ are such that $2 {\rm deg}(a_1(y)) < {\rm deg}(a_0(y))$ and
${\rm deg}(a_0(y))$ is odd. Then $f(t)$ is irreducible in $K[y][t;\sigma,\delta]$. In particular, if $a_1 \in K$ and
${\rm deg}(a_0(y))$ is odd then $f(t)$ is irreducible.
\end{corollary}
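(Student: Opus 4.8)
The plan is to run everything through the reducibility criterion of Proposition~\ref{Irreducibility_criteria_quantized_Weyl_Algebra}: the polynomial $f(t)$ is \emph{reducible} precisely when there exists $b(y)\in K[y]$ with
$$\Phi(b)(y) := b(qy)b(y) + \frac{b(qy)-b(y)}{qy-y} - a_1(y)b(y) - a_0(y) = 0.$$
The whole argument is then degree bookkeeping on $\Phi(b)$. The one fact I would record first is that the ``derivation term'' $\delta(b)=\frac{b(qy)-b(y)}{qy-y}$ strictly lowers degree: if $\deg b = n\ge 1$ with leading coefficient $b_n$, then $b(qy)b(y)$ has degree $2n$ with leading coefficient $q^n b_n^2\ne 0$, while $\delta(b)$ has degree at most $n-1$, since the constant term of the numerator cancels before dividing by $(q-1)y$. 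Thus the top of $\Phi(b)$ is governed by $b(qy)b(y)$ and $-a_1(y)b(y)$ alone, and crucially $\deg\big(b(qy)b(y)\big)=2n$ is always even.

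For (i) I would set $a_0,a_1\in K$ and suppose $\Phi(b)=0$. If $\deg b=n\ge1$, then among the summands of $\Phi(b)$ the term $b(qy)b(y)$ has degree $2n$, strictly larger than $\deg(a_1 b)=n$, $\deg(\delta(b))\le n-1$, and $\deg a_0=0$; hence the coefficient $q^nb_n^2\ne0$ of $y^{2n}$ cannot cancel, contradicting $\Phi(b)=0$. So any solution $b$ is a constant $b_0$, the derivation term vanishes, and $\Phi(b)=0$ reduces to $b_0^2-a_1b_0-a_0=0$. This scalar quadratic has a root in $K$ if and only if its discriminant $a_1^2+4a_0$ is a square in $K$ (using $\mathrm{char}\,K\ne2$), so $f(t)$ is reducible in $K[y][t;\sigma,\delta]$ iff $a_1^2+4a_0$ is a square; the same discriminant governs reducibility in $K[t]$, giving the triple equivalence.

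For (ii), write $r=\deg a_1$ and $s=\deg a_0$ with $s$ odd and $2r<s$, and suppose $\Phi(b)=0$ for some $b$ of degree $n$; note $b\ne0$ since $a_0\ne0$. I would compare the three relevant degrees $2n$, $r+n$ and $\le n-1$ in three cases. If $n>r$, then $2n>r+n$ and $2n>n-1$, so $\Phi(b)+a_0=b(qy)b(y)+\delta(b)-a_1 b$ has degree exactly $2n$; as $\Phi(b)=0$ this equals $a_0$, forcing $2n=s$, impossible because $s$ is odd. If $n<r$, the term $-a_1(y)b(y)$ dominates with degree $r+n$, forcing $r+n=s$; but $n<r$ yields $r+n<2r<s$, a contradiction. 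If $n=r$, the degree-$2n$ parts of $b(qy)b(y)$ and $-a_1 b$ may cancel, yet every summand has degree at most $2n=2r<s$, so $\deg\big(b(qy)b(y)+\delta(b)-a_1 b\big)\le 2r<s=\deg a_0$, and equality with $a_0$ is impossible. Hence no such $b$ exists and $f(t)$ is irreducible. The ``in particular'' case is immediate: $a_1\in K$ gives $r\le 0<s$, so $2r<s$ holds whenever $\deg a_0$ is odd.

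The step I expect to be the only genuinely delicate one is the $n=r$ case of part~(ii): there the two leading blocks can cancel, so one cannot read off the exact degree of $\Phi(b)$, and the argument must instead lean on the crude-but-sufficient bound $\deg\le 2r<s$. The parity hypothesis (odd $\deg a_0$) does the real work in the $n>r$ case, where it alone rules out the otherwise-possible matching of the even degree $2n$ with $s$.
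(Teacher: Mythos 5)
Your proof is correct and follows essentially the same route as the paper: both reduce to Proposition~\ref{Irreducibility_criteria_quantized_Weyl_Algebra} and then perform the same three-way comparison of $\deg b$ against $\deg a_1(y)$, with the parity of $\deg a_0(y)$ ruling out the dominant even-degree case and the discriminant argument settling part (i). The only cosmetic difference is that the paper first clears denominators by multiplying through by $(qy-y)$, whereas you bound $\deg\delta(b)\leq \deg b - 1$ directly; the degree bookkeeping is otherwise identical.
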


\begin{proof}
By Proposition
\ref{Irreducibility_criteria_quantized_Weyl_Algebra}, $f(t)$ is
irreducible in $K[y][t;\sigma,\delta]$ if and only if $$b(qy)b(y) +
\frac{b(qy) - b(y)}{qy - y} - a_1(y)b(y) \neq a_0(y)$$ for all $b(y)
\in K[y]$. this is equivalent to
\begin{equation} \label{eqn:quantised weyl degree1}
b(qy)b(y)(qy - y) + b(qy) - b(y) - a_1(y)b(y)
\neq a_0(y) (qy - y)
\end{equation}
for all $b(y) \in K[y]$.
Note that \eqref{eqn:quantised weyl degree1} is trivially
satisfied if $b(y) = 0$. If $b(y) = b \in K^{\times}$ then \eqref{eqn:quantised
weyl degree1} simplifies to
\begin{equation} \label{eqn:quantised weyl degree2}
b^2 - a_1(y)b \neq a_0(y).
\end{equation}
 (i) Suppose $a_0, a_1 \in K$. If $l={\rm deg}(b)  > 0$ then
$${\rm deg} \Big( b(qy)b(y)(qy - y) + b(qy) - b(y) -
a_1(y)b(y)(qy - y) \Big) = (2l + 1) > 1$$ and so
$$b(qy)b(y)(qy - y) + b(qy) - b(y) - a_1b(y)(qy - y)\neq a_0 (qy - y)$$
for all $b(y) \in K[y]$. Therefore $f(t)$ is irreducible in
$K[y][t;\sigma,\delta]$ if and only if \eqref{eqn:quantised weyl
degree2} is satisfied for all $b \in K$,  which is equivalent to $f(t)$ being irreducible in
$K[t]$, and this holds in turn if and only if $a_1^2 + 4
a_0$ is not a square in $K$.
\\ (ii) Suppose now that $2 {\rm deg}(a_1(y)) < {\rm deg}(a_0(y))$ and ${\rm deg}(a_0(y))$ is odd. Then for all
$b \in K^{\times}$ we have
$${\rm deg}(b^2 - a_1(y)b) = {\rm deg}(a_1(y)) < {\rm deg}(a_0(y))$$ unless $b = a_1(y) \in K$ in which case $b^2 -
a_1(y)b \in K$. In either case \eqref{eqn:quantised weyl degree2} is
satisfied.

Put $l={\rm deg}(b(y))  > 0$ then we have to consider 3 cases:
\\ If $l = {\rm deg}(a_1(y))$ then
$${\rm deg}(b(qy)b(y)(qy-y)) = 2l+1 = {\rm deg}(a_1(y)b(y)(qy-y))$$ which implies
\begin{equation*}
\begin{split}
{\rm deg} \Big( & \frac{b(qy)b(y)(qy - y) + b(qy) - b(y) -
a_1(y)b(y)(qy - y)}{qy - y} \Big) \\ & \qquad \leq (2l + 1)-1 = 2
{\rm deg}(a_1(y)) < {\rm deg}(a_0(y)).
\end{split}
\end{equation*}
\\ If $l < {\rm deg}(a_1(y))$ then
$${\rm deg}(a_1(y)b(y)(qy-y)) > {\rm deg}(b(qy)b(y)(qy-y)), \
{\rm deg}(b(qy)), \ {\rm deg}(b(y))$$ which implies
\begin{equation*}
\begin{split}
{\rm deg} \Big( & \frac{b(qy)b(y)(qy - y) + b(qy) - b(y) -
a_1(y)b(y)(qy - y)}{qy - y} \Big) \\ &= {\rm deg}(a_1(y)) + l + 1
-1 \\&=  {\rm deg}(a_1(y)) + l < 2 {\rm deg}(a_1(y)) < {\rm deg}(a_0(y)).
\end{split}
\end{equation*}
\\ Finally, if $l > {\rm deg}(a_1(y))$ then
$${\rm deg}(b(qy)b(y)(qy-y)) > {\rm deg}(a_1(y)b(y)(qy-y)), \
{\rm deg}(b(qy)), \ {\rm deg}(b(y))$$ which implies
\begin{equation*}
\begin{split}
{\rm deg} \Big( & \frac{b(qy)b(y)(qy - y) + b(qy) - b(y) -
a_1(y)b(y)(qy - y)}{qy - y} \Big) = (2l + 1)-1
\end{split}
\end{equation*}
is even.

In all cases we have $$b(qy)b(y)(qy - y) + b(qy) - b(y)-a_1(y)b(y)(qy - y)\neq a_0(y)(qy - y),$$
therefore $f(t)$ is irreducible.
\end{proof}

\subsection{Irreducible polynomials of degree two in $A_h$}

Recall that $A_h=K[y][t; \delta]$ with $\delta(r) = r'h$ for some $h(y)
\in K[y]$, where $r'$ is the usual derivation of $r$ with respect to $y$.
The algebras $A_h$ were comprehensively studied in \cite{BI, BII, BIII}. $A_h$ is simple if and only if $F$ has characteristic 0
and $h\in F^\times$ \cite[Corollary 7.5]{BI}.
 If $F$ has characteristic 0 then $A_h$ is a unique factorization domain \cite[Lemma 7.6]{BI}.

The irreducibility of a polynomial in $A_h$ clearly depends on the choice of $h$:

\begin{proposition} \label{Irreducibility_criteria_Ah_degree_2}
 $f(t) = t^2 - a_1 (y)t - a_0(y) \in K[y][t; \delta]$ is
irreducible if and only if
\begin{equation*}
b(y)^2 + b'(y)h(y) - a_1(y) b(y) - a_0(y) \neq 0
\end{equation*}
for all $b(y) \in K[y]$.
\end{proposition}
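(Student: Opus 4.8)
The plan is to specialize the degree-two criterion already established in Theorem \ref{prop:irreducibility criteria when S is a ring without zero divisors delta not 0}(i) to the present setting. Recall that $A_h = K[y][t;\delta]$ is the skew polynomial ring $S[t;\sigma,\delta]$ with base ring $S = K[y]$, twist $\sigma = \mathrm{id}_{K[y]}$, and derivation $\delta(r) = r'h$. Since $K[y]$ is an integral domain, $\sigma$ is an (injective) automorphism, and $\delta$ is a left $\sigma$-derivation, the hypotheses of that theorem are satisfied, so it applies verbatim.

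First I would invoke Theorem \ref{prop:irreducibility criteria when S is a ring without zero divisors delta not 0}(i), which asserts that $f(t) = t^2 - a_1 t - a_0$ is irreducible in $S[t;\sigma,\delta]$ if and only if
$$\sigma(b)b + \delta(b) - a_1 b - a_0 \neq 0$$
for all $b \in S$. Then I would substitute the explicit data of $A_h$: since $\sigma$ is the identity we have $\sigma(b(y)) = b(y)$, so $\sigma(b)b = b(y)^2$, and by definition $\delta(b) = b'(y)h(y)$. Making these replacements turns the general criterion into
$$b(y)^2 + b'(y)h(y) - a_1(y)b(y) - a_0(y) \neq 0$$
for all $b(y) \in K[y]$, which is exactly the claimed condition.

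There is no real obstacle here: the statement is a direct corollary of the general degree-two criterion, obtained simply by reading off the form of $\sigma$ and $\delta$ for $A_h$. The only point worth verifying along the way is that $\delta(r) = r'h$ genuinely is a left $\sigma$-derivation for $\sigma = \mathrm{id}_{K[y]}$; this follows from the Leibniz rule $(ab)' = a'b + ab'$ upon multiplying through by $h$, which gives $\delta(ab) = a\,\delta(b) + \delta(a)\,b$, the required identity when $\sigma$ is the identity. With that in hand, the specialization is immediate.
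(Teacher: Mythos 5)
Your proof is correct and follows exactly the paper's own route: both invoke Theorem \ref{prop:irreducibility criteria when S is a ring without zero divisors delta not 0}(i) and substitute $\sigma = \mathrm{id}_{K[y]}$ and $\delta(b) = b'h$ to obtain the stated criterion. The extra check that $\delta(r)=r'h$ is a $\sigma$-derivation is a harmless addition that the paper takes for granted.
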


\begin{proof}
We have $\sigma(b(y))b(y) + \delta(b(y)) - a_1(y)b(y) - a_0(y) =
b(y)^2 + b'(y)h(y) - a_1(y) b(y) - a_0(y).$ The assertion follows
from Theorem \ref{prop:irreducibility criteria when S is a ring
without zero divisors delta not 0}.
\end{proof}

\begin{corollary} \label{A_h degree irreducible t^2-a_1(y)t-a_0(y)}
Let $f(t) = t^2 - a_1(y)t - a_0(y) \in K[y][t;\delta]$ and let $a_{i,0}$ denote the constant terms of
$a_i(y)$, $0\leq i\leq 1$.
\\ (i) If
$${\rm deg}(a_0(y)) > 2 {\rm max} \{ {\rm deg}(a_1(y)), {\rm deg}(h(y))-1 \}$$ and ${\rm deg}(a_0(y))$ is odd, then $f(t)$ is
irreducible in $K[y][t;\delta]$.
\\ (ii) If $h(y)$ has zero constant term and $g(t)=t^2-a_{1,0}t-a_{0,0}$ is irreducible in $K[t]$,
 then $f(t)$ is irreducible in $K[y][t;\delta]$.
\end{corollary}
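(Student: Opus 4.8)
The plan is to run both parts through the criterion of Proposition~\ref{Irreducibility_criteria_Ah_degree_2}, which says that $f(t)$ is irreducible exactly when $b(y)^2 + b'(y)h(y) - a_1(y)b(y) \neq a_0(y)$ for every $b(y) \in K[y]$. In each case I would assume, toward a contradiction, that some $b(y)$ realizes the equality $b(y)^2 + b'(y)h(y) - a_1(y)b(y) - a_0(y) = 0$, and then extract an invariant that the hypotheses forbid: the degree for (i), the constant term for (ii).

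For (i), set $l = {\rm deg}(b(y))$ and compare the degrees of the three summands $b(y)^2$, $b'(y)h(y)$, and $a_1(y)b(y)$, namely $2l$, $l - 1 + {\rm deg}(h(y))$, and $l + {\rm deg}(a_1(y))$ (the middle summand vanishing when $b(y)$ is constant, and $b(y)=0$ being immediate since $a_0(y) \neq 0$). The decisive dichotomy is whether $l$ exceeds $M := {\rm max}\{{\rm deg}(a_1(y)),\, {\rm deg}(h(y)) - 1\}$. If $l > M$, then $2l$ strictly dominates the other two degrees, so the left-hand side has degree exactly $2l$, which is even and hence cannot equal the odd number ${\rm deg}(a_0(y))$. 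If instead $l \leq M$, I would bound each summand's degree above by $2M$: indeed $2l \leq 2M$, while ${\rm deg}(h(y)) - 1 \leq M$ gives $l - 1 + {\rm deg}(h(y)) \leq 2M$, and ${\rm deg}(a_1(y)) \leq M$ gives $l + {\rm deg}(a_1(y)) \leq 2M$. Since $2M < {\rm deg}(a_0(y))$ by hypothesis, the left-hand side has degree strictly below ${\rm deg}(a_0(y))$ and so cannot equal $a_0(y)$. Either way we reach a contradiction.

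For (ii), I would simply pass to constant terms by evaluating the relation at $y = 0$. Writing $b_0 = b(0)$, the term $b'(y)h(y)$ contributes nothing to the constant term precisely because $h(y)$ has zero constant term, so the constant part of the relation reads $b_0^2 - a_{1,0} b_0 - a_{0,0} = 0$. This exhibits $b_0 \in K$ as a root of $g(t) = t^2 - a_{1,0}t - a_{0,0}$, contradicting the irreducibility of $g$ over $K$, since a quadratic over a field is irreducible if and only if it has no root in $K$. Hence no such $b(y)$ exists and $f(t)$ is irreducible.

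I expect the main obstacle to be the degree bookkeeping in (i): one must rule out the possibility that the even leading term of $b(y)^2$, of degree $2l$, is cancelled by $b'(y)h(y)$ or $a_1(y)b(y)$. This is exactly what the hypothesis phrased through $M = {\rm max}\{{\rm deg}(a_1(y)),\, {\rm deg}(h(y)) - 1\}$ controls — for large $l$ the square dominates and forces an even degree, while for small $l$ every contribution is pushed below the odd value ${\rm deg}(a_0(y))$; any further cancellation only lowers the degree and thus cannot hurt. Part (ii) is routine once one observes that the assumption on $h(y)$ is precisely what annihilates the derivative term's constant contribution.
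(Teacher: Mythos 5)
Your proposal is correct and follows essentially the same route as the paper: both parts reduce to the criterion of Proposition~\ref{Irreducibility_criteria_Ah_degree_2}, part (i) by the same degree dichotomy on $l$ versus $M=\max\{\deg(a_1(y)),\deg(h(y))-1\}$ (even dominant degree $2l$ when $l>M$, degree bounded by $2M<\deg(a_0(y))$ when $l\leq M$), and part (ii) by comparing constant terms to produce a root of $g(t)$ in $K$. Your handling of constant $b(y)$ inside the dichotomy, rather than as a separate case as in the paper, is only a cosmetic difference.
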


\begin{proof}
$f(t)$ is irreducible in $K[y][t;\delta]$ if and only if
\begin{equation} \label{eqn:A_h degree irreducible t^2-a_1(y)t-a_0(y)}
b(y)^2 + b'(y)h(y) - a_1(y)b(y) \neq a_0(y)
\end{equation}
for all $b(y) \in K[y]$ by Proposition
\ref{Irreducibility_criteria_Ah_degree_2}.
\\ (i) If $b(y) = 0$
then \eqref{eqn:A_h degree irreducible t^2-a_1(y)t-a_0(y)} is
satisfied since $a_0(y) \neq 0$. If $b(y) \in K^{\times}$ then
$b'(y)h(y)=0$ which implies $${\rm deg} \big( b(y)^2 + b'(y)h(y) -
a_1(y)b(y) \big) = {\rm deg}(a_1(y)) < {\rm deg}(a_0(y)),$$
unless $a_1(y) \in K$ in which case $b(y)^2 + b'(y)h(y) - a_1(y)b(y)
\in K$. In either case \eqref{eqn:A_h degree irreducible
t^2-a_1(y)t-a_0(y)} is satisfied.

Now suppose $l={\rm deg}(b(y)) > 0$. We  consider the following two cases:
\\ If $l > {\rm max} \{ {\rm deg}(a_1(y)), {\rm deg}(h(y))-1 \}$ then
\begin{equation*}
\begin{split}
{\rm deg} \big( b(y)^2 + & b'(y)h(y) - a_1(y)b(y) \big) = {\rm deg}(b(y)^2) = 2l \\ &> 2 {\rm max} \{ {\rm deg}(a_1(y)), {\rm deg}(h(y))-1 \}
\end{split}
\end{equation*}
and is even.
\\ If $l \leq {\rm max} \{ {\rm deg}(a_1(y)), {\rm deg}(h(y))-1 \}$ then
\begin{align*}
{\rm deg} \big( b(y)^2 + b'(y)h(y) - a_1(y)b(y) \big) & \leq {\rm
max} \{ {\rm deg}(a_1(y)) + l, {\rm deg}(h(y)) - 1 + l \} \\ &= l
+ {\rm max} \{ {\rm deg}(a_1(y)), {\rm deg}(h(y)) - 1 \} \\ &
\leq 2 {\rm max} \{ {\rm deg}(a_1(y)), {\rm deg}(h(y)) - 1 \}.
\end{align*}
Therefore if ${\rm deg}(a_0(y)) > 2 {\rm max} \{ {\rm deg}(a_1(y)), {\rm deg}(h(y))-1 \}$ and ${\rm deg}(a_0(y))$ is
odd then \eqref{eqn:A_h degree irreducible t^2-a_1(y)t-a_0(y)} is
satisfied which implies $f(t)$ is irreducible.
\\ (ii) Suppose there exists some $b(y)$ such that $b(y)^2 + b'(y)h(y) - a_1(y)b(y) = a_0(y)$. Let $b(y)=\sum_{i=0}^sb_iy^i$.
Looking at the constant terms the equation then yields $b_0^2 - a_{1,0}b_0 = a_{0,0}$.
Thus if  $g(t)=t^2-a_{1,0}t-a_{0,0}$ is irreducible in $K[t]$ there is no such $b(y)$ and the assertion follows.
\end{proof}

\begin{corollary} \label{A_h t^2-a degree irreducible}
Let $f(t) = t^2 - a(y) \in K[y][t;\delta]$ and $h(y) \in K[y]$.
\\ (i) Suppose ${\rm deg} (h(y))\in\{0,1\}$. If ${\rm deg}(a(y))$ is odd then $f(t)$ is irreducible.
\\ (ii) Suppose ${\rm deg}(h(y)) = n \geq 2$ and ${\rm deg}(a(y)) \geq 2n - 2$ is odd. Then $f(t)$ is irreducible.
\end{corollary}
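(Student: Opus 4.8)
The plan is to derive both parts as immediate specializations of Corollary~\ref{A_h degree irreducible t^2-a_1(y)t-a_0(y)}(i), taking $a_1(y)=0$ and $a_0(y)=a(y)$. That corollary guarantees that $t^2-a_1(y)t-a_0(y)$ is irreducible in $K[y][t;\delta]$ as soon as ${\rm deg}(a_0(y))>2\max\{{\rm deg}(a_1(y)),{\rm deg}(h(y))-1\}$ and ${\rm deg}(a_0(y))$ is odd. Setting $a_1(y)=0$ forces ${\rm deg}(a_1(y))=-\infty$, so the maximum collapses to ${\rm deg}(h(y))-1$ and the required hypothesis simplifies to
\[
{\rm deg}(a(y))>2\bigl({\rm deg}(h(y))-1\bigr),\qquad {\rm deg}(a(y))\text{ odd}.
\]
Since ${\rm deg}(a(y))$ is assumed to be a finite odd integer, in particular $a(y)\neq0$, so the degenerate constant-term case excluded in the source corollary does not occur.

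For part~(i), I would simply note that ${\rm deg}(h(y))\in\{0,1\}$ makes the threshold $2({\rm deg}(h(y))-1)$ equal to $-2$ or $0$. As any nonzero polynomial of odd degree has ${\rm deg}(a(y))\geq1$, the strict inequality ${\rm deg}(a(y))>2({\rm deg}(h(y))-1)$ holds automatically in both cases, and the cited corollary yields irreducibility at once.

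For part~(ii), with ${\rm deg}(h(y))=n\geq2$ the threshold becomes $2(n-1)=2n-2$, an \emph{even} integer. The key---and essentially only---observation is that the non-strict bound ${\rm deg}(a(y))\geq 2n-2$ together with ${\rm deg}(a(y))$ odd already delivers the \emph{strict} inequality the source result needs: an odd integer that is at least the even number $2n-2$ must in fact be at least $2n-1$, hence $>2n-2$. Once ${\rm deg}(a(y))>2({\rm deg}(h(y))-1)$ is in hand, Corollary~\ref{A_h degree irreducible t^2-a_1(y)t-a_0(y)}(i) applies and gives the claim.

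I expect no genuine obstacle here: all the degree estimates have already been carried out in the preceding corollary, and the present argument reduces to unwinding the maximum when $a_1(y)=0$ and invoking a parity remark in~(ii). The only point worth stating explicitly is precisely that parity step, since it is what permits the hypothesis of~(ii) to include the boundary value $2n-2$ while the source corollary demands a strict inequality.
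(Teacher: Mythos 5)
Your proof is correct, but it takes a different route from the paper's. The paper does not invoke Corollary \ref{A_h degree irreducible t^2-a_1(y)t-a_0(y)} at all: it goes back to the criterion of Proposition \ref{Irreducibility_criteria_Ah_degree_2} and redoes the degree analysis from scratch, splitting into the cases $b(y)\in K$, ${\rm deg}(b(y))<n-1$, ${\rm deg}(b(y))>n-1$ and ${\rm deg}(b(y))=n-1$, and observing in each case that ${\rm deg}\bigl(b(y)^2+b'(y)h(y)\bigr)$ is either even or at most $2n-2$, hence never equals the odd number ${\rm deg}(a(y))$. Your argument instead notices that the statement is literally the specialization $a_1(y)=0$, $a_0(y)=a(y)$ of Corollary \ref{A_h degree irreducible t^2-a_1(y)t-a_0(y)}(i): with the paper's convention ${\rm deg}(0)=-\infty$ the maximum collapses to ${\rm deg}(h(y))-1$, part (i) follows since an odd degree is automatically $>2({\rm deg}(h(y))-1)\le 0$, and part (ii) follows from the parity remark that an odd integer $\ge 2n-2$ is necessarily $>2n-2$ (a remark the paper also needs implicitly, since its case analysis only rules out degrees strictly above $2n-2$ or even). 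Your reduction is shorter and exposes that the paper's separate proof is logically redundant; the paper's direct proof buys self-containedness and records the explicit degree bounds in each case (for example, where cancellation at degree $2n-2$ can occur), but proves nothing more than what your specialization yields.
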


\begin{proof}
Set $a_1(y) = 0$ in Corollary \ref{A_h degree irreducible t^2-a_1(y)t-a_0(y)} (i).
\end{proof}

%*******************************************************************************************%
%****************************************************************************************%

\end{document}